\newtheorem{theorem}{Theorem}[section]
\newtheorem{defn}[theorem]{Definition}
\newtheorem{lemma}[theorem]{Lemma}
\newtheorem{eple}[theorem]{Example}
\newtheorem{rmk}[theorem]{Remarks}
\newtheorem{dsc}[theorem]{Discussion}
\newtheorem{nota}[theorem]{Notation}
\newsavebox{\indbin}
\savebox{\indbin}{\begin{picture}(0,0)
\newlength{\gnu}
\settowidth{\gnu}{$\smile$} \setlength{\unitlength}{.5\gnu}
\put(-1,-.65){$\smile$} \put(-.25,.1){$|$}
\end{picture}}
\newcommand{\be}{\begin{enumerate}}
\newcommand{\bd}{\begin{defn}}
\newcommand{\bt}{\begin{theorem}}
\newcommand{\bl}{\begin{lemma}}
\newcommand{\ee}{\end{enumerate}}
\newcommand{\ed}{\end{defn}}
\newcommand{\et}{\end{theorem}}
\newcommand{\el}{\end{lemma}}
\begin{document}
\title{Non Oscillatory Functions and A Fourier Inversion Theorem for Some Functions of Very Moderate Decrease}
\author{Tristram de Piro}
\address{Flat 3, Redesdale House, 85 The Park, Cheltenham, GL50 2RP }
 \email{t.depiro@curvalinea.net}
\thanks{}
\begin{abstract}
We consider non oscillatory functions and prove an everywhere Fourier Inversion Theorem for functions of very moderate decrease. The proofs rely on some ideas in nonstandard analysis.
\end{abstract}
\maketitle
Non oscillatory functions of very moderate decrease are quite typical in the class $L^{2}(\mathcal{R})$ and are important in Physics. The first result presented here improves slightly on a result due to Carlsen, see \cite{C}, in that the convergence is everywhere rather than almost everywhere. This property is useful in verifying certain differentiability criteria in Physics, arising mainly from Maxwell's equations, and in showing decay properties of fields produced in connection with Jefimenko's equations, see \cite{dep1} and \cite{dep3}. At the end of the paper, we show how the result can be improved to functions of just very moderate decrease, without the non oscillatory assumption. The proofs rely on some ideas from nonstandard analysis, contained in the papers \cite{dep2} and \cite{dep0}.

\begin{defn}
\label{decreasenonoscillate}
We say that $f\in C(\mathcal{R})$ is of very moderate decrease if there exists a constant $C\in\mathcal{R}_{>0}$ with $|f(x)|\leq {C\over |x|}$, for $|x|>1$. We say that $f\in C(\mathcal{R})$ is of moderate decrease if there exists a constant $C\in\mathcal{R}_{>0}$ with $|f(x)|\leq {C\over |x|^{2}}$, for $|x|>1$. We say that $f\in C(\mathcal{R})$ is non-oscillatory if there exist finitely many points $\{y_{i}:1\leq i\leq n\}$, for which $f|_{(y_{i},y_{i+1})}$ is monotone, $1\leq i\leq n-1$, and $f|_{(-\infty,y_{1})}$, $f|_{(y_{n},\infty)}$ are monotone. We say that $f\in C(\mathcal{R})$ is oscillatory if there exists an infinite sequence of points $\{y_{i}:i\in\mathcal{Z}\}$, for which $f|_{(y_{i},y_{i+1})}$, $i\in\mathcal{Z}$, is monotone and there exists $\delta\in\mathcal{R}_{>0}$, with $|y_{i+1}-y_{i}|\geq \delta$, for $i\in\mathcal{Z}$.  \\
\end{defn}

\begin{lemma}
\label{riemannlebesgue5}
Let $f\in C(\mathcal{R})$ and ${df\over dx}\in C(\mathcal{R})$ be of very moderate decrease, with $f$ and ${df\over dx}$ non-oscillatory, then defining the Fourier transform by;\\

$\mathcal{F}(f)(k)={1\over (2\pi)^{1\over 2}}lim_{r\rightarrow \infty}\int_{-r}^{r}f(y)e^{-iky}dy$ $(k\neq 0)$\\

$\mathcal{F}({df\over dx})(k)={1\over (2\pi)^{1\over 2}}lim_{r\rightarrow \infty}\int_{-r}^{r}{df\over dx}(y)e^{-iky}dy$ $(k\neq 0)$\\

we have that $\mathcal{F}(f)(k)$ and $\mathcal{F}({df\over dx})(k)$ are bounded, for $|k|>k_{0}>0$, and there exists a constant $G\in\mathcal{R}_{>0}$, such that;\\

$|\mathcal{F}(f)(k)|\leq {G\over |k|^{2}}$\\

for sufficiently large $k$.

\end{lemma}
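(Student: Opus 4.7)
The plan is to relate $\mathcal{F}(f)$ to $\mathcal{F}(f')$ via integration by parts and then bound $|\mathcal{F}(f')(k)|$ by using the non-oscillatory structure of $f'$ together with the second mean value theorem for integrals. For $k \neq 0$ and $r > 1$, integration by parts (valid since $f, f' \in C(\mathcal{R})$) gives
\[
\int_{-r}^{r} f(y) e^{-iky}\, dy = \frac{f(-r)e^{ikr} - f(r)e^{-ikr}}{ik} + \frac{1}{ik} \int_{-r}^{r} f'(y) e^{-iky}\, dy.
\]
Since $|f(\pm r)| \leq C/r$, the boundary term is $O(1/(|k|r))$ and so vanishes as $r \to \infty$. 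Therefore, as soon as the existence of $\mathcal{F}(f')(k)$ is secured, one has $\mathcal{F}(f)(k) = \frac{1}{ik}\mathcal{F}(f')(k)$.

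Next, I would bound $\int_{-r}^{r} f'(y) e^{-iky}\, dy$ by partitioning $[-r, r]$ according to the finitely many points $z_{1} < \cdots < z_{m}$ furnished by the non-oscillatory hypothesis on $f'$, so that $f'$ is monotone on each of $(-\infty, z_{1}), (z_{1}, z_{2}), \ldots, (z_{m}, \infty)$. Writing $e^{-iky} = \cos(ky) - i\sin(ky)$ and applying the second mean value theorem on each monotone subinterval $[a, b]$ separately to the real and imaginary parts, one obtains
\[
\left|\int_{a}^{b} f'(y) e^{-iky}\, dy\right| \leq \frac{4}{|k|}\bigl(|f'(a)| + |f'(b)|\bigr).
\]
The contributions from the bounded subintervals are uniformly finite. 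On the unbounded tail $[z_{m}, r]$, the very moderate decrease of $f'$ yields $|f'(r)| \leq C/r$, so for $r_{2} > r_{1} > \max(z_{m}, 1)$ the tail piece is bounded by $\frac{4C}{|k| r_{1}}$, which supplies both the Cauchy criterion for the existence of $\mathcal{F}(f')(k)$ and an absolute bound $|\mathcal{F}(f')(k)| \leq M/|k|$ for some constant $M$. Combining with the first step yields $|\mathcal{F}(f)(k)| \leq M/|k|^{2}$ for all $k \neq 0$, so both $\mathcal{F}(f)$ and $\mathcal{F}(f')$ are bounded on $|k| > k_{0}$ and the constant $G$ exists as required.

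The main obstacle will be the careful application of the second mean value theorem to the unbounded monotone tails, since both the existence of the improper integral and the desired uniform bound in $k$ have to be extracted from the same estimate. One must also track both the sign and the direction of monotonicity of $f'$ on each tail in order to obtain a single clean bound in terms of $|f'(a)|$ and $|f'(b)|$. The nonstandard methods from \cite{dep2} and \cite{dep0} may provide a cleaner way to package the Cauchy-type argument, but a direct $\epsilon$-argument based on the $C/|y|$ decay of $f'$ should suffice.
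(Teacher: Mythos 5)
Your proposal is correct, and its skeleton (integrate by parts to get $\mathcal{F}(f)(k)=\frac{1}{ik}\mathcal{F}(f')(k)$, then extract $O(1/|k|)$ decay of $\mathcal{F}(f')$ from monotonicity plus the $C/|y|$ decay) matches the paper's. The difference lies in how the decay of $\mathcal{F}(f')$ is obtained. The paper first proves boundedness of $\mathcal{F}(f)$ and $\mathcal{F}(f')$ for $|k|>k_{0}$ by an alternating-series grouping of half-periods on the monotone tails, and then gets $|\mathcal{F}(f')(k)|<G/|k|$ for large $|k|$ by citing Lemma 0.9 of \cite{dep0} together with a nonstandard underflow argument, asserting (without detail) that the constants $F_{r},G_{r}$ there can be chosen uniformly in $r$. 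You instead apply the second mean value theorem on each of the finitely many monotone pieces and on the tails, which gives the bound $\bigl|\int_{-r}^{r}f'(y)e^{-iky}\,dy\bigr|\leq M/|k|$ explicitly and uniformly in $r$, and simultaneously yields the Cauchy criterion for existence of the improper integral; this is precisely the uniformity the paper leaves as ``easy to see,'' so your route is more elementary, self-contained (no nonstandard analysis), and in fact gives the stronger conclusion $|\mathcal{F}(f)(k)|\leq M/|k|^{2}$ for all $k\neq 0$, from which the stated boundedness for $|k|>k_{0}$ follows at once. Two trivial points to tidy: the tail Cauchy estimate should read $\frac{4}{|k|}\bigl(|f'(r_{1})|+|f'(r_{2})|\bigr)\leq \frac{8C}{|k|r_{1}}$ rather than $\frac{4C}{|k|r_{1}}$, and you should note that continuity of $f'$ lets you apply the second mean value theorem on the closed subintervals determined by the points $z_{1},\ldots,z_{m}$.
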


\begin{proof}
As $f$ is of very moderate decrease, we have that $f$ is continuous and $lim_{|x|\rightarrow\infty}f(x)=0$. Similarly, ${df\over dx}$ is continuous and $lim_{|x|\rightarrow\infty}{df\over dx}=0$. As $lim_{|x|\rightarrow\infty}f(x)=0$, and $f$ is non-oscillatory, we have that, for $k\neq 0$, the indefinite integral;\\

$lim_{r\rightarrow\infty}\int_{-r}^{r}f(y)e^{-iky}dy$\\

$=lim_{r\rightarrow\infty}\int_{-r}^{r}f(y)cos(ky)dy-ilim_{r\rightarrow\infty}\int_{-r}^{r}f(y)sin(ky)dy$\\

exists. As $f$ is of very moderate decrease and non-oscillatory, we have that $|f(x)|\leq {D\over |x|}$, for $|x|>E$, and monotone in the intervals $(-\infty,E)$ and $(E,\infty)$. Using the method of \cite{dep0}, letting $K=max(|f||_{[-E,E]})$, we have that;\\

$|lim_{r\rightarrow\infty}\int_{-r}^{r}f(y)cos(ky)dy|\leq 2KE+2(|\int_{E}^{{\pi\over 2|k|}+{n_{k}\pi\over |k|}}{Dcos(ky)\over y}dy|+|\int_{{\pi\over 2|k|}+{n_{k}\pi\over |k|}}^{{\pi\over 2|k|}+{(n_{k}+1)\pi\over |k|}}{Dcos(ky)\over y}dy|)$\\

$\leq 2KE+{2D\over E}({\pi\over 2|k|}+{n_{k}\pi\over |k|}-E)+2\int_{E}^{E+{\pi\over |k|}}{Dsin(|k|(y-E))\over y}dy$\\

$\leq 2KE+{2D\pi\over E|k|}+2\int_{E}^{E+{\pi\over |k|}}{Dsin(|k|(y-E))\over y}dy$\\

$|lim_{r\rightarrow\infty}\int_{-r}^{r}f(y)sin(ky)dy|\leq 2KE+2(|\int_{E}^{{m_{k}\pi\over |k|}}{Dsin(ky)\over y}dy|+|\int_{{m_{k}\pi\over |k|}}^{{(m_{k}+1)\pi\over |k|}}{Dsin(ky)\over y}dy|)$\\

$\leq 2KE+{2D\over E}({m_{k}\pi\over |k|}-E)+2\int_{E}^{E+{\pi\over |k|}}{Dsin(|k|(y-E))\over y}dy$\\

$\leq 2KE+{2D\pi\over E|k|}+2\int_{E}^{E+{\pi\over |k|}}{Dsin(|k|(y-E))\over y}dy$\\

where $n_{k}=\mu n({\pi\over 2|k|}+{n\pi\over |k|}\geq E:n\in\mathcal{Z}_{\geq 0})$ and $m_{k}=\mu n({n\pi\over |k|}\geq E:n\in\mathcal{Z}_{\geq 0})$\\

so that;\\

$|lim_{r\rightarrow\infty}\int_{-r}^{r}f(y)e^{-iky}dy|\leq 4KE+{4D\pi\over E|k|}+4\int_{E}^{E+{\pi\over |k|}}{Dsin(|k|(y-E))\over y}dy$\\

$=4KE+{4D\pi\over E|k|}+4D([{-cos(|k|(y-E))\over |k|y}]^{E+{\pi\over |k|}}_{E}-\int_{E}^{E+{\pi\over |k|}}{cos(|k|(y-E))\over y^{2}}dy)$\\

$=4KE+{4D\pi\over E|k|}+4D({1\over E|k|+\pi}+{1\over E|k|}+\int_{E}^{E+{\pi\over |k|}}{cos(|k|(y-E))\over y^{2}}dy)$\\

$\leq 4KE+{4D\pi\over E|k|}+4D({1\over E|k|+\pi}+{1\over E|k|}+\int_{E}^{\infty}{1\over y^{2}}dy)$\\

$\leq 4KE+{4D\pi\over E|k|}+4D({2\over E|k|}+{1\over E})=N_{k}$\\

so that $\mathcal{F}(f)(k)$ and, similarly, $\mathcal{F}({df\over dx})(k)$ are bounded, for $|k|>k_{0}>0$.\\

We have, using integration by parts, that;\\

$\mathcal{F}({df\over dx})(k)={1\over (2\pi)^{1\over 2}}lim_{r\rightarrow\infty}\int_{-r}^{r}{df\over dx}(y)e^{-iky}dy$\\

$={1\over (2\pi)^{1\over 2}}lim_{r\rightarrow\infty}([f(y)e^{-iky}]^{r}_{-r}+ik\int_{-r}^{r}f(y)e^{-iky}dy)$\\

$={1\over (2\pi)^{1\over 2}}[f(y)e^{-iky}]^{\infty}_{-\infty}+ik{1\over (2\pi)^{1\over 2}}lim_{r\rightarrow\infty}\int_{-r}^{r}f(y)e^{-iky}dy$\\

$=ik\mathcal{F}(f)(k)$\\

so that, for $|k|>1$;\\

$|\mathcal{F}(f)(k)|\leq {|\mathcal{F}({df\over dx})(k)|\over |k|}$, $(\dag)$\\

As ${df\over dx}$ is continuous and non-oscillatory, by the proof of Lemma 0.9 in \cite{dep0}, using underflow, for $r\in\mathcal{R}_{>0}$, we can find $\{F_{r},G_{r}\}\subset\mathcal{R}_{>0}$, such that, for all $|k|>F_{r}$, we have that;\\

$|{1\over (2\pi)^{1\over 2}}\int_{-r}^{r}{df\over dx}(y) e^{-iky}dy|<{G_{r}\over |k|}$, $(**)$\\

It is easy to see from the proof, that $\{F_{r},G_{r}\}$ can be chosen uniformly in $r$. Then, from $(**)$, we obtain that, for $|k|>F$;\\

$|\mathcal{F}({df\over dx})(k)|<{G\over |k|}$, for $|k|>F$\\

and, from $(\dag)$, for $|k|>max(F,1)$, that;\\

$|\mathcal{F}(f)(k)|\leq {|\mathcal{F}({df\over dx})(k)|\over |k|}<{G\over |k|^{2}}$\\

\end{proof}

\begin{defn}
\label{approx}
Let $f\in C^{3}(\mathcal{R})$, with $f,f',f''$ and $f'''$ bounded, then we define an approximating sequence $\{f_{m}:m\in\mathcal{N}\}$ by the requirements;\\

$(i)$. $f_{m}\in C^{2}(\mathcal{R})$, for $m\in\mathcal{N}$.\\

$(ii)$. $f_{m}|_{[-m,m]}=f|_{[-m,m]}$.\\

$(iii)$. $f_{m}$ is of uniform moderate decay, in the sense that there exists a constant $C\in\mathcal{R}_{>0}$, independent of $m$, with;\\

$|f_{m}(x)|\leq {C\over |x|^{2}}$, for $x\in (-\infty,-m-{1\over m})\cup (m+{1\over m},\infty)$\\

$(iv)$. There exists constants $\{D,E\}\subset\mathcal{R}_{>0}$, with $\int_{-m-{1\over m}}^{m}|f_{m}(x)|dx\leq {D\over m}$  and $\int_{m}^{m+{1\over m}}|f_{m}(x)|dx\leq {D\over m}$.\\

\end{defn}
\begin{lemma}
\label{sequences}
Let $f\in C(\mathcal{R})$ and ${df\over dx}\in C(\mathcal{R})$ be of very moderate decrease, with $f$ and ${df\over dx}$ non-oscillatory. Let $\{f_{m};m\in\mathcal{N}\}$ be an approximating sequence. Let $\mathcal{F}$ be the ordinary Fourier transform, defined for each $f_{m}$, then, for any $k_{0}>0$, the sequence $\{\mathcal{F}(f_{m}):m\in\mathcal{N}\}$ converges pointwise and uniformly to $\mathcal{F}(f)$ on ${\mathcal{R}\setminus (|k|<k_{0})}$, where $\mathcal{F}(f)$ is defined in Lemma \ref{riemannlebesgue5}. In particularly, $\mathcal{F}(f)\in C({\mathcal{R}\setminus\{0\}})$.

\end{lemma}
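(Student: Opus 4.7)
The plan is to estimate $|\mathcal{F}(f_{m})(k) - \mathcal{F}(f)(k)|$ for $|k| \geq k_{0}$ and show the bound tends to zero as $m \to \infty$, uniformly in $k$. Since condition $(ii)$ of Definition \ref{approx} gives $f_{m}\equiv f$ on $[-m,m]$, the difference reduces to contributions from $|y|>m$:
\[
(2\pi)^{1/2}\bigl[\mathcal{F}(f_{m})(k) - \mathcal{F}(f)(k)\bigr] = \int_{|y|>m} f_{m}(y) e^{-iky}\, dy \;-\; \lim_{r\to\infty}\int_{m<|y|<r} f(y) e^{-iky}\, dy.
\]

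First I would bound the $f_{m}$-tail. Split the domain at $|y|=m+{1\over m}$: property $(iv)$ contributes at most $2D/m$ (summing the two sides), and on the outer piece property $(iii)$ gives the absolute bound $\int_{|y|>m+{1\over m}} {C\over |y|^{2}}\, dy \leq {2C\over m+1/m}$. Both estimates are independent of $k$, so this term is $O(1/m)$ uniformly in $k$.

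Next, the $f$-tail. Since $f$ is of very moderate decrease and non-oscillatory, there exist $\{D,E\}\subset\mathcal{R}_{>0}$ such that $|f(y)|\leq D/|y|$ and $f$ is monotone on $(E,\infty)$ and on $(-\infty,-E)$. For $m>E$, apply Bonnet's second mean value theorem to the real and imaginary parts separately: using that antiderivatives of $\cos(ky)$ and $\sin(ky)$ are bounded in absolute value by $2/|k|$ and that $f(y)\to 0$, one obtains $|\int_{m}^{r} f(y) e^{-iky}\, dy| \leq 4|f(m)|/|k| \leq 4D/(m k_{0})$, with a symmetric bound on $(-\infty,-m)$. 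Combined with the previous step, this yields $|\mathcal{F}(f_{m})(k)-\mathcal{F}(f)(k)| = O(1/m)$ uniformly on $|k|\geq k_{0}$, giving pointwise and uniform convergence. Continuity of $\mathcal{F}(f)$ on $\mathcal{R}\setminus\{0\}$ then follows, since each $\mathcal{F}(f_{m})$ is continuous ($f_{m}\in L^{1}$ by $(iii)$) and a uniform limit of continuous functions is continuous; for any $k\neq 0$, choosing $k_{0}=|k|/2$ produces a neighborhood of $k$ on which continuity holds.

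The main obstacle is the tail estimate for $f$ itself. Unlike $f_{m}$, which enjoys $1/|y|^{2}$ decay by construction, $f$ has only $1/|y|$ decay, so any naive absolute bound diverges and cancellation in $e^{-iky}$ must be exploited through the non-oscillatory (i.e.\ eventual monotonicity) hypothesis. Bonnet's theorem gives the cleanest route, but equivalently one may copy the block-by-block summation used in the proof of Lemma \ref{riemannlebesgue5}, replacing the initial cutoff $E$ there by $m$ here; the crucial observation, either way, is that the resulting bound depends on $k$ only through the factor $1/|k|$, which is controlled by $1/k_{0}$ on the region of interest.
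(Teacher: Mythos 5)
Your proof is correct and follows essentially the same route as the paper: you exploit $f_m=f$ on $[-m,m]$ to reduce everything to tail estimates, bound the $f_m$-tail exactly as the paper does via properties $(iii)$ and $(iv)$ of Definition \ref{approx}, and bound the $f$-tail by a $1/(mk_0)$ term using monotonicity plus cancellation. The only difference is cosmetic: where the paper simply asserts the bound $|\mathcal{F}(f)(k)-\mathcal{F}_m(f)(k)|\leq C_{k_0}/m$ (justified later by the block-summation estimate $(B)$ in Lemma \ref{inversion}), you supply that step explicitly via Bonnet's second mean value theorem, which is a legitimate equivalent of the paper's alternating-block argument.
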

\begin{proof}
For $g\in C(\mathcal{R})$ and $n\in\mathcal{N}$, define;\\

$\mathcal{F}_{n}(g)(k)={1\over (2\pi)^{1\over 2}}\int_{-n}^{n}f(y)e^{-iky}dy$\\

For $k\in{\mathcal{R}\setminus (|k|<k_{0})}$, $\{m,n\}\subset\mathcal{N}$, and $m\geq n$, $\epsilon>0$,$\delta>0$, we have;\\

$|\mathcal{F}(f)(k)-\mathcal{F}(f_{m})(k)|\leq |\mathcal{F}(f)(k)-\mathcal{F}_{n}(f)(k)|+|\mathcal{F}_{m}(f)(k)-\mathcal{F}_{m}(f_{m})(k)|$\\

$+|\mathcal{F}_{m}(f_{m})(k)-\mathcal{F}(f_{m})(k)|$\\

$=|\mathcal{F}(f)(k)-\mathcal{F}_{m}(f)(k)|+|\mathcal{F}_{m}(f_{m})(k)-\mathcal{F}(f_{m})(k)|$\\

$\leq |\mathcal{F}(f)(k)-\mathcal{F}_{m}(f)(k)|+\int_{-\infty}^{-m}|f_{m}(x)|dx+|+\int_{m}^{\infty}|f_{m}(x)|dx$\\

$=|\mathcal{F}(f)(k)-\mathcal{F}_{m}(f)(k)|+\int_{-\infty}^{-m-{1\over m}}|f_{m}(x)dx+\int_{-m-{1\over m}}^{-m}|f_{m}(x)|dx$\\

$+\int_{m}^{m+{1\over m}}|f_{m}(x)dx+\int_{m+{1\over m}}^{\infty}|f_{m}(x)|dx$\\

$\leq |\mathcal{F}(f)(k)-\mathcal{F}_{m}(f)(k)|+{D+E\over m}+\int_{-\infty}^{-m-{1\over m}}{C\over x^{2}}dx+\int_{m+{1\over m}}^{\infty}{C\over x^{2}}dx$\\

$\leq  |\mathcal{F}(f)(k)-\mathcal{F}_{m}(f)(k)|+{D+E\over m}+{2C\over m+{1\over m}}$\\

$\leq  |\mathcal{F}(f)(k)-\mathcal{F}_{m}(f)(k)|+{2C+D+E\over m}$\\

$\leq {C_{k_{0}}\over m}+{2C+D+E\over m}$\\

$\leq \epsilon+\delta$, for $m\geq max({C_{k_{0}}\over\epsilon},{2C+D+E\over \delta})$. As $\epsilon>0$ and $\delta>0$ were arbitrary, we obtain the first result. The fact that each $\mathcal{F}(f_{m})$ in the approximating sequence is continuous follows from $(iii)$ in definition \ref{approx} and the DCT.  The last result then follows immediately from the fact that $k_{0}>0$ is arbitrary and the uniform limit of continuous functions is continuous.

\end{proof}

\begin{lemma}
\label{polynomial2}
If $m\in\mathcal{R}_{>0}$ is sufficiently large, $\{a_{0},a_{1},a_{2}\}\subset\mathcal{R}$, there exists $h\in \mathcal{R}[x]$ of degree $5$, with the property that;\\

$h(m)=a_{0}$, $h'(m)=a_{1}$, $h''(m)=a_{2}$, $(i)$\\

$h(m+{1\over m})=h'(m+{1\over m})=h''(m+{1\over m})=0$ $(ii)$\\

$|h_{[m,m+{1\over m}]}|\leq C$\\

for some $C\in\mathcal{R}_{>0}$, independent of $m$ sufficiently large, and, if $h'''(m)>0$, $h'''(x)|_{[m,m+{1\over m}]}>0$, if $h'''(m)<0$, $h'''|_{[m,m+{1\over m}]}<0$. In particularly;\\

$\int_{m}^{m+{1\over m}}|h'''(x)|dx=|a_{2}|$\\

\end{lemma}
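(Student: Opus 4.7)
The plan is to build $h$ by Hermite interpolation on $[m, m+1/m]$, after rescaling to a fixed reference interval. First, I would introduce $s = m(x - m - 1/m)$, so that $s$ ranges over $[-1, 0]$ as $x$ ranges over $[m, m+1/m]$. Condition $(ii)$ forces a triple zero at $s = 0$, so every degree-$5$ polynomial meeting $(ii)$ can be written as $\tilde h(s) := h(x) = s^3(A + Bs + Cs^2)$ with unknowns $A, B, C$. The three conditions in $(i)$, translated via the chain rule into values of $\tilde h$, $\tilde h'$ and $\tilde h''$ at $s = -1$, become a $3 \times 3$ linear system in $(A, B, C)$ whose coefficient matrix is fixed, $m$-independent, and non-singular and whose right-hand side is an affine function of $a_0$, $a_1/m$ and $a_2/m^2$. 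This yields existence and uniqueness of $h$ satisfying $(i)$ and $(ii)$.

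Solving this system expresses each of $A, B, C$ as a fixed linear combination of $a_0$, $a_1/m$ and $a_2/m^2$, so for $m \geq 1$ one obtains uniform bounds $|A|, |B|, |C| \leq C'(|a_0| + |a_1| + |a_2|)$ with $C'$ independent of $m$. Since $|s| \leq 1$ on the rescaled interval, the trivial estimate $|h(x)| = |s|^3\,|A + Bs + Cs^2| \leq |A| + |B| + |C|$ then gives the required uniform bound on $|h|_{[m, m+1/m]}$.

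For the last two assertions I would begin from the telescoping identity
\[
\int_{m}^{m+1/m} h'''(x)\, dx \;=\; h''(m + 1/m) - h''(m) \;=\; -a_2,
\]
so that the integral equality $\int |h'''| = |a_2|$ is equivalent to $h'''$ having constant sign on $[m, m+1/m]$. Writing $h'''(x) = m^3(6A + 24Bs + 60Cs^2)$ and substituting the formulas for $A, B, C$, the sign claim reduces to showing that the resulting quadratic in $s$ has no root strictly interior to $[-1, 0]$ for all sufficiently large $m$. The idea is to track the leading behaviour of its discriminant and root locations as $m \to \infty$, and to rule out interior roots by direct estimate, thereby forcing the sign on the interval to agree with the sign at $s = -1$, i.e.\ with the sign of $h'''(m)$.

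The main obstacle is exactly this sign analysis. The quadratic depends nontrivially on all three parameters $a_0, a_1, a_2$ simultaneously, and the bound of the $1/m$ and $1/m^2$ corrections against the leading $a_0$-term is what one has to control in order to pin down the root structure; it is at this step that the hypothesis "$m$ sufficiently large" has to be used in full.
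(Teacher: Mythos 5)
Your construction is essentially the paper's: a triple zero at $m+\frac{1}{m}$ times a quadratic factor, a nonsingular $3\times 3$ linear system for the $2$-jet at $m$ (the paper parametrizes the quadratic by its Taylor data at $m$ rather than by your rescaled monomials, but since, as you note, the interpolant is unique, this is only a change of basis), and your uniform bound and the telescoping identity $\int_m^{m+1/m}h'''(x)dx=-a_2$ are correct. The step you defer, however, is not a remainder estimate that can be pushed through: it is false, and the uniqueness you established means there is no freedom left to rescue it. Abstractly, if $a_2=0$ while $h'''(m)\neq 0$, a strict constant sign of $h'''$ on $[m,m+\frac{1}{m}]$ would make $h''$ strictly monotone there, contradicting $h''(m)=0=h''(m+\frac{1}{m})$. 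Concretely, your system forces $(A,B,C)\rightarrow(-10a_0,-15a_0,-6a_0)$, so $m^{-3}h'''=6A+24Bs+60Cs^2\rightarrow -60a_0(1+6s+6s^2)$, whose roots $s=\frac{-3\pm\sqrt{3}}{6}\approx -0.211,\,-0.789$ lie strictly inside $(-1,0)$. In fact no limit is needed: for $a_0=1$, $a_1=a_2=0$ the unique interpolant is exactly the reversed quintic smoothstep, $h(m+\frac{t}{m})=1-10t^3+15t^4-6t^5$, with $h'''(m)=-60m^3<0$ but $h'''>0$ on $(m+\frac{3-\sqrt{3}}{6m},\,m+\frac{3+\sqrt{3}}{6m})\subset(m,m+\frac{1}{m})$, whence $\int_m^{m+1/m}|h'''(x)|dx>0=|a_2|$. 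So both the sign claim and the final identity of the lemma fail for the degree-$5$ interpolant, for every $m$.

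For comparison, the paper's proof carries out exactly the root-location computation you sketch, but reaches the favorable conclusion (roots near $\frac{19m}{24}$ and $\frac{91m}{72}$, outside the interval) only through an arithmetic slip: the linear coefficient of $h'''$ is $24\cdot 12\,a_0m^6-72m(-6a_0m^5)=720a_0m^6+O(m^5)$, not $740a_0m^6$, and with $720$ the leading discriminant $720^2-4\cdot 360\cdot 360$ vanishes, so the leading order only says the two roots cluster near $x=m$; the next order places them at $m+\frac{3\pm\sqrt{3}}{6m}$, inside $[m,m+\frac{1}{m}]$, in agreement with the exact computation above. So your instinct that the sign analysis is the main obstacle is right, but the obstacle is fatal to the statement itself, not merely to your write-up; the same objection (with $a_2=0$, $a_3\neq0$) applies to the non-strict sign claim of Lemma \ref{differentialequation}, and note that one cannot simply fall back on a bound for $\int_m^{m+1/m}|h'''(x)|dx$ in place of the identity $=|a_2|$, since for the interpolant this integral grows like $m^2|a_0|$, which would in turn undermine the estimate $|\mathcal{F}(f_m)(k)|\leq \frac{Cm}{|k|^3}$ of Lemma \ref{approx1} that the later inversion arguments rely on.
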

\begin{proof}
If $p(x)$ is any polynomial, we have that $h(x)=(x-(m+{1\over m}))^{3}p(x)$ satisfies condition $(ii)$. Then;\\

$h'(x)=3(x-(m+{1\over m}))^{2}p(x)+(x-(m+{1\over m}))^{3}p'(x)$\\

$h''(x)=6(x-(m+{1\over m}))p(x)+6(x-(m+{1\over m}))^{2}p'(x)+(x-(m+{1\over m}))^{3}p''(x)$\\

$h'''(x)=6p(x)+18(x-(m+{1\over m}))p'(x)+9(x-(m+{1\over m}))^{2}p''(x)$\\

so we can satisfy $(i)$, by requiring that;\\

$(a)$. $-{p(m)\over m^{3}}=a_{0}$\\

$(b)$. ${3p(m)\over m^{2}}-{p'(m)\over m^{3}}=a_{1}$\\

$(c)$. ${-6p(m)\over m}+{6p'(m)\over m^{2}}-{p''(m)\over m^{3}}=a_{2}$\\

which has the solution;\\

$p(m)=-a_{0}m^{3}$, $p'(m)=-3a_{0}m^{4}-a_{1}m^{3}$, $p''(m)=-12a_{0}m^{5}-6a_{1}m^{4}-a_{2}m^{3}$\\

and can be satisfied by the polynomial;\\

$p(x)={1\over 2}(-12a_{0}m^{5}-6a_{1}m^{4}-a_{2}m^{3})(x-m)^{2}$\\

$+(-3a_{0}m^{4}-a_{1}m^{3})(x-m)+(-a_{0}m^{3})$\\

$={1\over 2}(-12a_{0}m^{5}-6a_{1}m^{4}-a_{2}m^{3})x^{2}+(-m(-12a_{0}m^{5}-6a_{1}m^{4}-a_{2}m^{3})$\\

$+(-3a_{0}m^{4}-a_{1}m^{3}))x+({m^{2}\over 2}(-12a{0}m^{5}-6a_{1}m^{4}-a_{2}m^{3})$\\

$-m(-3a_{0}m^{4}-a_{1}m^{3})-a_{0}m^{3})$\\

$=(-6a_{0}m^{5}-3a_{1}m^{4}-{a_{2}\over 2}m^{3})x^{2}+(12a_{0}m^{6}+6a_{1}m^{5}+a_{2}m^{4}-3a_{0}m^{4}$\\

$-a_{1}m^{3})x+(-6a_{0}m^{7}-3a_{1}m^{6}-{a_{2}\over 2}m^{5}+3a_{0}m^{5}+a_{1}m^{4}-a_{0}m^{3})$\\

$=(-6a_{0}m^{5}-3a_{1}m^{4}-{a_{2}\over 2}m^{3})x^{2}+(12a_{0}m^{6}+6a_{1}m^{5}+(a_{2}-3a_{0})m^{4}$\\

$-a_{1}m^{3})x+(-6a_{0}m^{7}-3a_{1}m^{6}+(3a_{0}-{a_{2}\over 2})m^{5}+3a_{0}m^{5}+a_{1}m^{4}-a_{0}m^{3})$\\

$=ax^{2}+bx+c$ $(*)$\\

so that;\\

$h'''(x)=6(ax^{2}+bx+c)+18(x-(m+{1\over m}))(2ax+b)+9(x-(m+{1\over m}))^{2}2a$\\

$=(60a)x^{2}+(24b-72a(m+{1\over m}))x+(6c-18(m+{1\over m})b+18a(m+{1\over m})^{2})$\\

and, using the computation $(*)$\\

$h'''(x)=(60(-6a_{0}m^{5})+O(m^{4}))x^{2}+(24.12a_{0}m^{6}-72m(-6a_{0}m^{5})$\\

$+O(m^{5}))x+(6.-6a_{0}m^{7}-18m(12a_{0}m^{6})+18m^{2}(-6a_{0}m^{5})+O(m^{6}))$\\

$=(-360a_{0}m^{5}+O(m^{4}))x^{2}+(740a_{0}m^{6}+O(m^{5}))x+$\\

$(-360a_{0}m^{7}+O(m^{6}))$\\

which, by the quadratic formula, has roots when;\\

$x={-740a_{0}m^{6}+\-\sqrt{740^{2}a_{0}^{2}m^{12}-4(-360a_{0}m^{5})(-360a_{0}m^{7})}\over 2.-360a_{0}m^{5}}+O(1)$\\

$={740m\over 720}+\-{170m\over 720}+O(1)$\\

$={19m\over 24}+O(1)$ or ${91m\over 72}+O(1)$\\

We have that $m>{19m\over 24}$ and $m+{1\over m}<{91m\over 72}$ iff $m>\sqrt{72\over 19}$, and, clearly, we can ignore the $O(1)$ term for $m$ sufficiently large. In particularly, for sufficiently large $m$, $h'''(x)$ has no roots in the interval $[m,m+{1\over m}]$, so $h'''|_{[m,m+{1\over m}]}>0$ or $h'''|_{[m,m+{1\over m}]}<0$.\\

We calculate that;\\

$|h_{[m,m+{1\over m}]}|=|(x-(m+{1\over m}))^{3}p(x)|_{[m,m+{1\over m}]}|$\\

$\leq {1\over m^{3}}|p(x)|_{[m,m+{1\over m}]}$\\

$={1\over m^{3}}|[{1\over 2}(-12a_{0}m^{5}-6a_{1}m^{4}-a_{2}m^{3})(x-m)^{2}$\\

$+(-3a_{0}m^{4}-a_{1}m^{3})(x-m)+(-a_{0}m^{3})]|_{[m,m+{1\over m}]}$\\

$\leq {1\over m^{3}}[{1\over 2}|-12a_{0}m^{5}-6a_{1}m^{4}-a_{2}m^{3}|{1\over m^{2}}+|-3a_{0}m^{4}-a_{1}m^{3}|{1\over m}+|-a_{0}m^{3}|]$\\

$\leq {12|a_{0}|m^{5}+6|a_{1}|m^{4}+|a_{2}|m^{3}|\over m^{5}}+{3|a_{0}|m^{4}+|a_{1}|m^{3}\over m^{4}}+{|a_{0}|m^{3}\over m^{3}}$\\

$\leq 12|a_{0}|+6|a_{1}|+|a_{2}|+3|a_{0}|+|a_{1}|+|a_{0}|$ $(m>1)$\\

$\leq 16|a_{0}|+7|a_{1}|+|a_{2}|$\\

For the final claim, we have, as $h'''|_{[m,m+{1\over m}]}>0$ or $h'''|_{[m,m+{1\over m}]}<0$, that, using the fundamental theorem of calculus;\\

$\int_{m}^{m+{1\over m}}|h'''(x)|dx=|\int_{m}^{m+{1\over m}}h'''(x)dx|$\\

$=|h''(m+{1\over m})-h''(m)|=|-h''(m)|=|a_{2}|$\\
\end{proof}
\begin{lemma}
\label{differentialequation}
If $m\in\mathcal{R}_{>0}$, $\{a_{0},a_{1},a_{2},a_{3}\}\subset\mathcal{R}$, there exists $h\in C^{3}(\mathcal{R})$, with the property that;\\

$h(m)=a_{0}$, $h'(m)=a_{1}$, $h''(m)=a_{2}$,  $h'''(m)=a_{3}$, $(i)$\\

$h(m+{1\over m})=h'(m+{1\over m})=h''(m+{1\over m})=h'''(m+{1\over m})=0$ $(ii)$\\

$|h|_{[m,m+1]}\leq C$\\

where $C\in\mathcal{R}_{>0}$ is independent of $m>1$, and, if $a_{3}>0$, $h'''(x)|_{[m,m+{1\over m}]}\geq 0$, $a_{3}<0$, $h'''(x)|_{[m,m+{1\over m}]}\leq 0$. In particularly;\\

$\int_{m}^{m+{1\over m}}|h'''(x)|dx=|a_{2}|$\\

\end{lemma}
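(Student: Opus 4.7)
The plan is to mirror Lemma \ref{polynomial2} with one additional derivative of freedom, raising the polynomial degree from $5$ to $7$. Specifically, I would use the ansatz
$$h(x) = \bigl(x - (m+\tfrac{1}{m})\bigr)^{4} p(x)$$
on $[m,m+\tfrac{1}{m}]$, with $p \in \mathcal{R}[x]$ of degree $3$, and extend $h$ by $0$ on $\mathcal{R}\setminus [m,m+\tfrac{1}{m}]$. The quartic vanishing factor at $c=m+\tfrac{1}{m}$ gives condition $(ii)$ automatically together with $C^{3}$ continuity of the zero-extension, so the function is $C^{3}(\mathcal{R})$. The four conditions in $(i)$ then become four linear equations in $p(m),p'(m),p''(m),p'''(m)$ obtained by applying Leibniz's rule to $h^{(k)}(m)$ for $k=0,1,2,3$. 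This is a triangular system (in the unknowns in the order listed), and it is solved exactly as in the proof of Lemma \ref{polynomial2}: after inversion one finds $|p^{(k)}(m)| = O(m^{k+4})$, with leading coefficients in $m$ that are explicit linear combinations of $a_{0},a_{1},a_{2},a_{3}$. The polynomial $p$ is then specified as its Taylor expansion of degree $3$ about $m$.

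For the uniform bound, one notes that on $[m,m+\tfrac{1}{m}]$ one has $|x-m|\leq \tfrac{1}{m}$, so
$$|p(x)|\leq |p(m)|+\tfrac{1}{m}|p'(m)|+\tfrac{1}{2m^{2}}|p''(m)|+\tfrac{1}{6m^{3}}|p'''(m)|=O(m^{4}),$$
and combined with $|(x-c)^{4}|\leq m^{-4}$ this gives a bound $|h|_{[m,m+\frac{1}{m}]}\leq C$ for some constant $C$ depending linearly on $|a_{0}|,|a_{1}|,|a_{2}|,|a_{3}|$ and independent of $m>1$. The bound then extends trivially to $[m,m+1]$ by the zero-extension, giving the required $|h|_{[m,m+1]}\leq C$.

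The sign claim on $h'''$ is the main obstacle and uses the argument of Lemma \ref{polynomial2} in a slightly more delicate form. Expanding by Leibniz,
$$h'''(x)=24(x-c)p(x)+36(x-c)^{2}p'(x)+12(x-c)^{3}p''(x)+(x-c)^{4}p'''(x),$$
so $h'''$ is a polynomial of degree $4$ in $x$ whose coefficients are polynomials in $m$. One inserts the explicit expressions for $p(m),p'(m),p''(m),p'''(m)$ and tracks the leading $m$-order of each coefficient of $h'''$; as in the $O(m^{5})$ analysis carried out for the quadratic case, the leading powers of $m$ are dictated principally by the $a_{0}$-terms, and the lower order $a_{1},a_{2},a_{3}$-terms contribute only $O$-corrections that are negligible for $m$ sufficiently large. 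Applying the quartic-to-quadratic reduction (or directly the implicit function theorem to the perturbed equation obtained by normalising the leading coefficient) locates the four roots of $h'''$ at distances of order $m$ from the interval $[m,m+\tfrac{1}{m}]$, so for $m$ large enough $h'''$ has no roots in $[m,m+\tfrac{1}{m}]$ and is therefore of constant sign there, necessarily equal to $\mathrm{sgn}(h'''(m))=\mathrm{sgn}(a_{3})$. Finally, by the fundamental theorem of calculus,
$$\int_{m}^{m+\frac{1}{m}}|h'''(x)|\,dx=\left|\int_{m}^{m+\frac{1}{m}}h'''(x)\,dx\right|=|h''(m+\tfrac{1}{m})-h''(m)|=|a_{2}|,$$
which gives the last claim.
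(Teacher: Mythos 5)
Your Hermite-interpolation set-up is fine as far as it goes: taking $h(x)=(x-c)^{4}p(x)$ with $c=m+\frac{1}{m}$ and $p$ cubic, the triangular solve giving $|p^{(k)}(m)|=O(m^{k+4})$, and the resulting uniform bound $|h|\leq C$ on $[m,m+\frac{1}{m}]$ all work (modulo the cosmetic point, shared with the paper, that the zero extension to the left of $m$ is not even continuous at $m$ unless the $a_{i}$ vanish; $h$ is only ever used as a bridge on $[m,m+\frac{1}{m}]$). The genuine gap is the sign claim. The root-location argument of Lemma \ref{polynomial2} worked because there $h'''(m)$ was \emph{not} prescribed: it came out of size comparable to $a_{0}m^{3}$, consistent with all roots of the (quadratic) $h'''$ sitting at distance of order $m$ from the interval. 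Here you additionally impose $h'''(m)=a_{3}=O(1)$, and that destroys the picture: by the fundamental theorem of calculus $\int_{m}^{m+\frac{1}{m}}h'''(x)\,dx=h''(m+\frac{1}{m})-h''(m)=-a_{2}$, so the \emph{average} of $h'''$ over an interval of length $\frac{1}{m}$ is $-a_{2}m$, while $h'''(m)=a_{3}$ and $h'''(c)=0$. Hence for $a_{2}\neq 0$ the quartic $h'''$ must reach values of order $m$ inside the interval, and whenever $a_{3}$ and $-a_{2}$ are nonzero with opposite signs it must vanish there; the roots are not at distance of order $m$. A concrete counterexample to your "no roots for large $m$" step: $a_{0}=1$, $a_{1}=a_{2}=a_{3}=0$. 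Then your $h'''$ integrates to zero over $[m,m+\frac{1}{m}]$ but is a nonzero quartic, so it changes sign in the interval, and $\int_{m}^{m+\frac{1}{m}}|h'''|\,dx>0=|a_{2}|$, so the final identity fails for your $h$ as well.

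Note also that the constant-sign conclusion with sign $\mathrm{sgn}(a_{3})$ together with the FTC identity forces $\mathrm{sgn}(-a_{2})=\mathrm{sgn}(a_{3})$, so no construction can deliver the statement for arbitrary data within your ansatz; this is exactly the point at which the paper abandons polynomials. Its proof writes $h=(x-(m+\frac{1}{m}))^{n}(g_{1}+g_{2})$ with $g_{1}$ the explicit cubic interpolant and $g_{2}$ a non-polynomial correction obtained by solving a third-order Euler-type ODE (via Peano existence, a power-series particular solution, and the characteristic roots of the Euler equation), with the forcing $\phi$ chosen, under a "without loss of generality" sign hypothesis on the data, precisely so that the equation $h'''(x)=0$ has no solution in $(m,m+\frac{1}{m})$. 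In other words, the sign control is engineered through the extra functional degree of freedom $g_{2}$, not read off from root locations of a fixed interpolating polynomial, and your argument diverges from the paper exactly at the step where it breaks; the claim "roots at distance of order $m$, hence constant sign equal to $\mathrm{sgn}(a_{3})$" cannot be repaired inside the pure degree-$7$ Hermite ansatz.
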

\begin{proof}
Let $g(x)$ be a polynomial, then it is clear that the polynomial $h_{1}(x)=(x-(m+{1\over m})^{n}g(x)$, for $n\geq 4$, has the property $(ii)$, that $h_{1}(m+{1\over m})=h_{1}'(m+{1\over m})=h_{1}''(m+{1\over m})=h_{1}'''(m+{1\over m})=0$. The condition $(i)$, then amounts to the equations;\\

$(i)'$ ${g(m)\over (-1)^{n}m^{n}}=a_{0}$\\

$(ii)'$ ${ng(m)\over (-1)^{n-1}m^{n-1}}+{g'(m)\over (-1)^{n}m^{n}}=a_{1}$\\

$(iii)'$ ${n(n-1)g(m)\over (-1)^{n-2}m^{n-2}}+{2ng'(m)\over (-1)^{n-1}m^{n-1}}+{g''(m)\over (-1)^{n}m^{n}}=a_{2}$\\

$(iv)'$ ${n(n-1)(n-2)g(m)\over (-1)^{n-3}m^{n-3}}+{3n(n-1)g'(m)\over (-1)^{n-2}m^{n-2}}+{3ng''(m)\over (-1)^{n-1}m^{n-1}}+{g'''(m)\over (-1)^{n}m^{n}}=a_{3}$\\

which we can solve, by requiring that;\\

$(i)''$ $g(m)=(-1)^{n}a_{0}m^{n}$\\

$(ii)''$ $g'(m)=(-1)^{n}a_{1}m^{n}+(-1)^{n}a_{0}nm^{n+1}$\\

$(iii)''$ $g''(m)=(-1)^{n}a_{2}m^{n}+2(-1)^{n}na_{1}m^{n+1}+(-1)^{n}n(n+1)a_{0}m^{n+2}$\\

$(iv)''$ $g'''(m)=(-1)^{n}a_{3}m^{n}+3n(-1)^{n}a_{2}m^{n+1}+(-1)^{n}a_{1}n(n+3)m^{n+2}$\\

$+n(n+1)(n+2)(-1)^{n}a_{0}m^{n+3}$ $(*)$\\

Let;\\

$g_{1}(x)=((-1)^{n}a_{3}m^{n}+3n(-1)^{n}a_{2}m^{n+1}+(-1)^{n}a_{1}n(n+3)m^{n+2}$\\

$+n(n+1)(n+2)(-1)^{n}a_{0}m^{n+3})(x-m)^{3}+((-1)^{n}a_{2}m^{n}+2(-1)^{n}na_{1}m^{n+1}$\\

$+(-1)^{n}n(n+1)a_{0}m^{n+2})(x-m)^{2}+((-1)^{n}a_{1}m^{n}+(-1)^{n}a_{0}nm^{n+1})$\\

$(x-m)+((-1)^{n}a_{0}m^{n})$\\

Then $g_{1}(x)$ satisfies $(*)$, and so does any function of the form $g_{2}(x)+g_{1}(x)$ where;\\

$g_{2}(m)=g_{2}'(m)=g_{2}''(m)=g_{2}'''(m)=0$\\

provided $g_{2}\in C^{3}(\mathcal{R})$. In this case, if;\\

$h(x)=(x-(m+{1\over m})^{n}(g_{2}(x)+g_{1}(x))$\\

then $h$ satisfies $(i),(ii)$. We have that;\\

$|x-(m+{1\over m})^{n}g_{1}(x)|_{[m,m+{1\over m}]}\leq {1\over m^{n}}(|g_{2}|_{[m,m+{1\over m}]}+|g_{1}|_{[m,m+{1\over m}]})$\\

$\leq {1\over m^{n}}(|g_{2}|_{[m,m+{1\over m}]}+{1\over m^{n}}|((-1)^{n}a_{3}m^{n}+3n(-1)^{n}a_{2}m^{n+1}+(-1)^{n}a_{1}n(n+3)m^{n+2}$\\

$+n(n+1)(n+2)(-1)^{n}a_{0}m^{n+3}){1\over m^{3}}+((-1)^{n}a_{2}m^{n}+2(-1)^{n}na_{1}m^{n+1}$\\

$+(-1)^{n}n(n+1)a_{0}m^{n+2}){1\over m^{2}}+((-1)^{n}a_{1}m^{n}+(-1)^{n}a_{0}nm^{n+1})$\\

${1\over m}+((-1)^{n}a_{0}m^{n})|$\\

$=|((-1)^{n}a_{3}m^{n}+3n(-1)^{n}a_{2}m^{n+1}+(-1)^{n}a_{1}n(n+3)m^{n+2}$\\

$+n(n+1)(n+2)(-1)^{n}a_{0}m^{n+3}){1\over m^{n+3}}+((-1)^{n}a_{2}m^{n}+2(-1)^{n}na_{1}m^{n+1}$\\

$+(-1)^{n}n(n+1)a_{0}m^{n+2}){1\over m^{n+2}}+((-1)^{n}a_{1}m^{n}+(-1)^{n}a_{0}nm^{n+1})$\\

${1\over m^{n+1}}+((-1)^{n}a_{0})|$\\

$\leq |a_{3}|+3n|a_{2}|+n(n+3)|a_{1}|+n(n+1)(n+2)|a_{0}|+|a_{2}|+2n|a_{1}|+n(n+1)|a_{0}|+|a_{1}|+n|a_{0}|+|a_{0}|$, $(m\geq 1)$\\

$={1\over m^{n}}(|g_{2}|_{[m,m+{1\over m}]}+(n+1)(n^{2}+3n+1)|a_{0}|+(n^{2}+5n+1)|a_{1}+(3n+1)|a_{2}|+|a_{3}|=F$ $(F)$\\

where $F\in\mathcal{R}_{>0}$ is independent of $m$. Using the product rule, the condition that $h'''(x)=0$ in the interval $(m,m+{1\over m})$, is given by;\\

$n(n-1)(n-2)(x-(m+{1\over m}))^{n-3}(g_{2}+g_{1})(x)+3n(n-1)(x-(m+{1\over m}))^{n-2}(g_{2}+g_{1})'(x)$\\

$+3n(x-(m+{1\over m}))^{n-1}(g_{2}+g_{1})''(x)+(x-(m+{1\over m}))^{n}(g_{2}+g_{1})'''(x)=0$\\

which, dividing by $(x-(m+{1\over m}))^{n-3}$, reduces to;\\

$n(n-1)(n-2)(g_{2}+g_{1})(x)+3n(n-1)(x-(m+{1\over m}))(g_{2}+g_{1})'(x)+$\\

$3n(x-(m+{1\over m}))^{2}(g_{2}+g_{1})''(x)+(x-(m+{1\over m}))^{3}(g_{2}+g_{1})'''(x)=0$\\

and;\\

$n(n-1)(n-2)g_{2}(x)+3n(n-1)(x-(m+{1\over m}))g_{2}'(x)+3n(x-(m+{1\over m}))^{2}g_{2}''(x)$\\

$+(x-(m+{1\over m}))^{3}g_{2}'''(x)=-(n(n-1)(n-2)g_{1}(x)+3n(n-1)(x-(m+{1\over m}))g_{1}'(x)$\\

$+3n(x-(m+{1\over m}))^{2}g_{1}''(x)+(x-(m+{1\over m}))^{3}g_{1}'''(x))$ $(A)$\\

Without loss of generality, assuming that;\\

$-(n(n-1)(n-2)g_{1}(x)+3n(n-1)(x-(m+{1\over m}))g_{1}'(x)+3n(x-(m+{1\over m}))^{2}g_{1}''(x)$\\

$+(x-(m+{1\over m}))^{3}g_{1}'''(x))|_{m}=-(n(n-1)(n-2)a_{0}-{3n(n-1)a_{1}\over m}+{3na_{2}\over m^{2}}$\\

$-{a_{3}\over m^{3}}\geq 0$\\

we can choose an analytic function $\phi(x)$ on $[m,m+{1\over m}]$ with;\\

$(a)$. $\phi(x)\leq -(n(n-1)(n-2)g_{1}(x)+3n(n-1)(x-(m+{1\over m}))g_{1}'(x)+3n(x-(m+{1\over m}))^{2}g_{1}''(x)$\\

$+(x-(m+{1\over m}))^{3}g_{1}'''(x))$\\

$(b)$. $\phi(m)=0$\\

The third order differential equation for $g_{2}$;\\

$n(n-1)(n-2)g_{2}(x)+3n(n-1)(x-(m+{1\over m}))g_{2}'(x)+3n(x-(m+{1\over m}))^{2}g_{2}''(x)$\\

$+(x-(m+{1\over m}))^{3}g_{2}'''(x)=\phi(x)$, on $[m,1+m]$ $(B)$\\

with the requirement that $g_{2}(m)=g_{2}'(m)=g_{2}''(m)=0$, has a solution in $C^{3}([m,m+{1\over m}))$ by Peano's existence theorem. By the fact $(b)$, we must have that $g_{2}'''(m)=0$. Writing the power series for $\phi$ on $[m,m+{1\over m}]$, as;\\

$\phi(x)=\sum_{j=0}^{\infty}b_{j}(x-(m+{1\over m}))^{j}$\\

we can use the method of equating coefficients, to obtain a particular solution, with;\\

$g_{2,part}(x)=\sum_{j=0}^{\infty}a_{j,part}(x-(m+{1\over m}))^{j}$, with;\\

$a_{j,part}={b_{j}\over n(n-1)(n-2)+3n(n-1)j+3nj(j-1)+j(j-1)(j-2)}$, $(j\geq 3)$\\

$a_{2,part}={b_{2}\over n(n-1)(n-2)+6n(n-1)+3n}$ $a_{1,part}={b_{1}\over n(n-1)(n-2)+3n(n-1)}$ $a_{0,part}={b_{0}\over n(n-1)(n-2)}$\\

so that $g_{2,part}$ is analytic as $|a_{j,0}|\leq {|b_{j}|\over n(n-1)(n-2)}$ for $j\geq 0$.\\

To solve the homogenous Euler equation;\\

$n(n-1)(n-2)g_{2}(x)+3n(n-1)(x-(m+{1\over m}))g_{2}'(x)+3n(x-(m+{1\over m}))^{2}g_{2}''(x)$\\

$+(x-(m+{1\over m}))^{3}g_{2}'''(x)=0$ on $[m,m+{1\over m}]$\\

we can make the substitution $y=m+{1\over m}-x$, to reduce to the equation;\\

$n(n-1)(n-2)g_{2,m}(y)+3n(n-1)yg_{2,m}'(y)+3ny^{2}g_{2,m}''(y)+y^{3}g_{2,m}'''(y)=0$ on $[0,{1\over m}]$\\

with $g_{2,m}(y)=g_{2}(m+{1\over m}-y)$. Making the further substitution $y=e^{u}$, and letting $r_{2,m}(u)=g_{2,m}(e^{u})$, we have that;\\

$r_{2,m}'(u)=g_{2,m}'(e^{u})e^{u}$\\

$r_{2,m}''(u)=g_{2,m}''(e^{u})e^{2u}+g_{2,m}'(e^{u})e^{u}$\\

$r_{2,m}'''(u)=g_{2,m}'''(e^{3u})+3g_{2,m}''(e^{u})e^{2u}+g_{2,m}'(e^{u})e^{u}$\\

so that;\\

$n(n-1)(n-2)g_{2,m}(e^{u})+3n(n-1)e^{u}g_{2,m}'(e^{u})+3ne^{2u}g_{2,m}''(e^{u})+e^{3u}g_{2,m}'''(e^{u})$\\

$=n(n-1)(n-2)r_{2,m}(u)+3n(n-1)e^{u}(r_{2,m}'(u)e^{-u})+3ne^{2u}((r_{2,m}''(u)-g_{2,m}'(e^{u})e^{u})e^{-2u})$\\

$+e^{3u}((r_{2,m}'''(u)-3g_{2,m}''(e^{u})e^{2u}-g_{2,m}'(e^{u})e^{u})e^{-3u})$\\

$=n(n-1)(n-2)r_{2,m}(u)+3n(n-1)r_{2,m}'(u)+3nr_{2,m}''(u)-3ng_{2,m}'(e^{u})e^{u}+r_{2,m}'''(u)-3g_{2,m}''(e^{u})e^{2u}$\\

$-g_{2,m}'(e^{u})e^{u}$\\

$=n(n-1)(n-2)r_{2,m}(u)+3n(n-1)r_{2,m}'(u)+3nr_{2,m}''(u)+r_{2,m}'''(u)-(3n+1)g_{2,m}'(e^{u})e^{u}-3g_{2,m}''(e^{u})e^{2u}$\\

$=n(n-1)(n-2)r_{2,m}(u)+3n(n-1)r_{2,m}'(u)+3nr_{2,m}''(u)+r_{2,m}'''(u)-(3n+1)r_{2,m}'(u)$\\

$-3e^{2u}((r_{2,m}''(u)-g_{2,m}'(e^{u})e^{u})e^{-2u})$\\

$=n(n-1)(n-2)r_{2,m}(u)+(3n^{2}-6n-1)r_{2,m}'(u)+3nr_{2,m}''(u)+r_{2,m}'''(u)-3r_{2,m}''(u)+3g_{2,m}'(e^{u})e^{u}$\\

$=n(n-1)(n-2)r_{2,m}(u)+(3n^{2}-6n-1)r_{2,m}'(u)+3(n-1)r_{2,m}''(u)+r_{2,m}'''(u)+3r_{2,m}'(u)$\\

$=n(n-1)(n-2)r_{2,m}(u)+(3n^{2}-6n+2)r_{2,m}'(u)+(3n-3)r_{2,m}''(u)+r_{2,m}'''(u)=0$ $(C)$\\

We have that;\\

$(\lambda^{3}+3(n-1)\lambda^{2}+(3n^{2}-6n+2)\lambda+n(n-1)(n-2))'=3\lambda^{2}+6(n-1)\lambda+(3n^{2}-6n+2)$\\

which has roots when $\lambda=-(n-1)+\-{1\over \sqrt{3}}$, so that, for large n, the characteristic polynomial of $(C)$ has exactly one real root $\lambda_{1}$ and 2 complex conjugate non-real roots, $\{\lambda_{2}+i\lambda_{3},\lambda_{2}-i\lambda_{3}\}$. It follows, the general solution of $(C)$ is given by;\\

$r_{2,m}(u)=A_{1}e^{\lambda_{1}u}+A_{2}e^{\lambda_{2}u+i\lambda_{3}}+A_{3}e^{\lambda_{2}u-i\lambda_{3}}$\\

where $\{A_{1},A_{2},A_{3}\}\subset\mathcal{C}$, and, we can obtain a real solution, fitting the corresponding initial conditions, of the form;\\

$r_{2,m}(u)=B_{1}e^{\lambda_{1}u}+B_{2}e^{\lambda_{2}u}cos(\lambda_{3}u)+B_{3}e^{\lambda_{2}u}sin(\lambda_{3}u)$\\

where $\{B_{1},B_{2},B_{3}\}\subset\mathcal{R}$. It follows that;\\

$g_{2,m}(y)=r_{2,m}(ln(y))$\\

$g_{2}(x)=g_{2,m}(m+{1\over m}-x)+g_{2,part}(x)=r_{2,m}(ln(m+{1\over m}-x))+g_{2,part}(x)$\\

$=B_{1}e^{\lambda_{1}ln(m+{1\over m}-x)}+B_{2}e^{\lambda_{2}ln(m+{1\over m}-x)}cos(\lambda_{3}ln(m+{1\over m}-x))$\\

$+B_{3}e^{\lambda_{2}ln(m+{1\over m}-x)}sin(\lambda_{3}ln(m+{1\over m}-x))+g_{2,part}(x)$ (on $[m,m+{1\over m}]$)\\

We have that;\\

$\lambda_{1}|\lambda_{2}+i\lambda_{3}|^{2}=-n(n-1)(n-2)$\\

$\lambda_{1}+\lambda_{2}+i\lambda_{3}+\lambda_{2}-i\lambda_{3}=\lambda_{1}+2\lambda_{2}=-3(n-1)$\\

Computing the highest degree in $n$ term of the characteristic polynomial, we obtain that, for $\lambda=\alpha n$;\\

$\alpha^{3}n^{3}+3n(\alpha n)^{2}+3n^{2}(\alpha n)+n^{3}=n^{3}(\alpha+3)^{3}=0$\\

so that $\alpha=-3$, $\lambda_{1}=-3n+O(1)$ and $2\lambda_{2}=-3(n-1)-(-3n+O(1))=3-O(1)=O(1)$\\

Then, if $B_{1}=0$, we can see that $g_{2}(x)$ has at most a ${1\over x^{O(1)}}$ singularity at $(m+{1\over m})$, which we can achieve with a 2-parameter family choice for the initial conditions of $\{\phi(m),\phi'(m),\phi''(m)\}$. If;\\

$-(n(n-1)(n-2)a_{0}-{3n(n-1)a_{1}\over m}+{3na_{2}\over m^{2}}-{a_{3}\over m^{3}}\neq 0$\\

we can clearly achieve this, while satisfying $(a),(b)$. If;\\

$-(n(n-1)(n-2)a_{0}-{3n(n-1)a_{1}\over m}+{3na_{2}\over m^{2}}-{a_{3}\over m^{3}}=0$\\

by requiring the the additional property $(c)$;\\

$\phi'(m)<-(n(n-1)(n-2)g_{1}(x)+3n(n-1)(x-(m+{1\over m}))g_{1}'(x)+3n(x-(m+{1\over m}))^{2}g_{1}''(x)$\\

$+(x-(m+{1\over m}))^{3}g_{1}'''(x))'|_{m}$\\

we can clearly satisfy $(a),(b)$ as well.\\

Then, as, for sufficiently large $n$;\\

$lim_{x\rightarrow 0}({B_{2}x^{n}\over x^{O(1)}}sin(\lambda_{3}ln(x))+{B_{3}x^{n}\over x^{O(1)}}cos(\lambda_{3}ln(x)))$\\

$=lim_{x\rightarrow 0}({B_{2}x^{n}\over x^{O(1)}}sin(\lambda_{3}ln(x))+{B_{3}x^{n}\over x^{O(1)}}cos(\lambda_{3}ln(x)))'$\\

$=lim_{x\rightarrow 0}({B_{2}x^{n}\over x^{O(1)}}sin(\lambda_{3}ln(x))+{B_{3}x^{n}\over x^{O(1)}}cos(\lambda_{3}ln(x)))''$\\

$=lim_{x\rightarrow 0}({B_{2}x^{n}\over x^{O(1)}}sin(\lambda_{3}ln(x))+{B_{3}x^{n}\over x^{O(1)}}cos(\lambda_{3}ln(x)))'''=0$\\

we obtain that $(x-(m+{1\over m}))^{n}g_{2}(x)$ extends to a solution in $C^{3}([m,m+{1\over m}])$, and $(x-(m+{1\over m}))^{n}(g_{2}+g_{1})(x)\in C^{3}([m,m+{1\over m}])$. By the fact $(a)$, $(A)$ has no solutions in $(m,m+{1\over m})$, so that $h'''(x)\geq 0$.\\

We have that;\\

$|(x-(m+{1\over m})^{n}g_{2}(x)|_{[m,m+{1\over m}]}=|(x-(m+{1\over m})^{n}(B_{2}e^{\lambda_{2}ln(m+{1\over m}-x)}cos(\lambda_{3}ln(m+{1\over m}-x))$\\

$+B_{3}e^{\lambda_{2}ln(m+{1\over m}-x)}sin(\lambda_{3}ln(m+{1\over m}-x))+g_{2,part}(x))|$\\

$\leq |B_{2}|m^{\lambda_{2}-n}+|B_{3}|m^{\lambda_{2}-n}+m^{-n}|g_{2,part}(x)|$\\

Noting the right hand side of $(a)$ is bounded by $O(m^{n})$ on $[m,m+{1\over m}]$, we can also choose $\phi(x)$ and $g_{2,part}(x)$ to be of $O(m^{n})$ on $[m,m+{1\over m}]$, irrespective of the choice of initial conditions $\{\phi(m),\phi'(m),\phi''(m)\}$. We have that $\phi'(m)=O(m^{n+1})$, in the special case, so that choosing $\{B_{2},B_{3}\}$ sufficiently small, noting;\\

$(x-(m+{1\over m})^{n}(B_{2}e^{\lambda_{2}ln(m+{1\over m}-x)}cos(\lambda_{3}ln(m+{1\over m}-x))$\\

$+B_{3}e^{\lambda_{2}ln(m+{1\over m}-x)}sin(\lambda_{3}ln(m+{1\over m}-x)))'|_{m}=O(max(B_{2}m^{n-\lambda_{2}-1},B_{3}m^{n-\lambda_{2}-1}))$\\

we can assume that;\\

$|(x-(m+{1\over m})^{n}g_{2}(x)|_{[m,m+{1\over m}]}\leq D$\\

where $D\in\mathcal{R}_{>0}$ is independent of $m$, so that, using $(F)$;\\

$|h(x)|_{[m,m+{1\over m}]}\leq |(x-(m+{1\over m})^{n}g_{1}(x)|_{[m,m+{1\over m}]}+|(x-(m+{1\over m})^{n}g_{2}(x)|_{[m,m+{1\over m}]}\leq F+D$\\

For the final claim, we have, as $h'''|_{[m,m+{1\over m}]}\geq 0$ or $h'''|_{[m,m+{1\over m}]}\leq 0$, that, using the fundamental theorem of calculus, that;\\

$\int_{m}^{m+{1\over m}}|h'''(x)|dx=|\int_{m}^{m+{1\over m}}h'''(x)dx|$\\

$=|h''(m+{1\over m})-h''(m)|=|-h''(m)|=|a_{2}|$\\

\end{proof}
\begin{lemma}
\label{approx1}
Let $f$ be as in Definition \ref{approx}, then there exists an approximating sequence $\{f_{m}:m\in\mathcal{N}\}$. Moreover, for sufficiently large $m$, $|\mathcal{F}(f_{m})(k)|\leq {Cm\over |k|^{3}}$, for $|k|>1$, where $C\in\mathcal{R}_{>0}$, independent of $m$.\\

\end{lemma}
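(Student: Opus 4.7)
The plan is in two parts. First I would build the sequence $f_{m}$ by extending $f$ itself on $[-m,m]$ via two polynomial transition pieces on $[m,m+\tfrac{1}{m}]$ and $[-m-\tfrac{1}{m},-m]$ supplied by Lemma \ref{polynomial2}, and then by zero outside $[-m-\tfrac{1}{m},m+\tfrac{1}{m}]$. Then I would obtain the Fourier bound by three integrations by parts, even though $f_{m}$ is only $C^{2}$.

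\textbf{Construction of $f_{m}$.} Apply Lemma \ref{polynomial2} on $[m,m+\tfrac{1}{m}]$ with data $(a_{0},a_{1},a_{2})=(f(m),f'(m),f''(m))$ to produce $h_{m}^{+}\in\mathcal{R}[x]$ matching $f$ through the second derivative at $m$, vanishing through the second derivative at $m+\tfrac{1}{m}$, and bounded on $[m,m+\tfrac{1}{m}]$ by $16|f(m)|+7|f'(m)|+|f''(m)|$; build $h_{m}^{-}$ symmetrically on the left. Set $f_{m}=f$ on $[-m,m]$, $f_{m}=h_{m}^{\pm}$ on the two transition intervals, and $f_{m}=0$ elsewhere. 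Matching of derivatives at $\pm m$ and vanishing at $\pm(m+\tfrac{1}{m})$ give $f_{m}\in C^{2}(\mathcal{R})$; property $(iii)$ of Definition \ref{approx} is trivial because $f_{m}$ has compact support; and property $(iv)$ follows because $\|f\|_{\infty},\|f'\|_{\infty},\|f''\|_{\infty}<\infty$ make the uniform bound on $h_{m}^{\pm}$ independent of $m$, so $\int_{m}^{m+1/m}|f_{m}|\leq(16\|f\|_{\infty}+7\|f'\|_{\infty}+\|f''\|_{\infty})/m$.

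\textbf{Fourier bound.} Since $f_{m}$ is compactly supported and of class $C^{2}$, two integrations by parts give
\[
\mathcal{F}(f_{m})(k)=\frac{1}{(2\pi)^{1/2}(ik)^{2}}\int_{-m-1/m}^{m+1/m}f_{m}''(y)e^{-iky}\,dy.
\]
Now $f_{m}'''$ exists and is continuous on each of the pieces $[-m-\tfrac{1}{m},-m]$, $[-m,m]$, $[m,m+\tfrac{1}{m}]$. Integrating by parts once more on each piece, the boundary contributions at $\pm(m+\tfrac{1}{m})$ vanish because $f_{m}''$ does there, while those at $\pm m$ cancel pairwise by continuity of $f_{m}''$ (which is the content of $f_{m}\in C^{2}$). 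This yields, in the piecewise sense,
\[
\mathcal{F}(f_{m})(k)=\frac{1}{(2\pi)^{1/2}(ik)^{3}}\int_{-m-1/m}^{m+1/m}f_{m}'''(y)e^{-iky}\,dy.
\]
Taking absolute values, the interior piece contributes $\int_{-m}^{m}|f'''|\leq 2m\|f'''\|_{\infty}$, and by the final clause of Lemma \ref{polynomial2} each transition piece contributes exactly $|f''(\pm m)|\leq\|f''\|_{\infty}$. The total is $O(m)$, so $|\mathcal{F}(f_{m})(k)|\leq Cm/|k|^{3}$ for $|k|>1$ with $C$ independent of $m$.

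The main conceptual point is the extra integration by parts beyond $C^{2}$ regularity: continuity of $f_{m}''$ at $\pm m$ is exactly what kills the potentially harmful boundary terms produced by the jumps in $f_{m}'''$, and the identity $\int|h'''|=|a_{2}|$ from Lemma \ref{polynomial2} is exactly what keeps the transition contribution uniformly bounded in $m$. The linear growth in $m$ of $\int|f_{m}'''|$ is forced by having preserved $f$ on $[-m,m]$, and it is precisely the $m$ appearing in the stated bound.
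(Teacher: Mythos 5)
Your proposal is correct and follows essentially the same route as the paper: the same construction of $f_{m}$ (keep $f$ on $[-m,m]$, use the transition polynomials of Lemma \ref{polynomial2} with data $(f(\pm m),f'(\pm m),f''(\pm m))$, extend by zero), the same verification of $(i)$--$(iv)$ via the uniform bound $16||f||_{\infty}+7||f'||_{\infty}+||f''||_{\infty}$, and the same estimate $\int_{-\infty}^{\infty}|f_{m}'''|\leq 2m||f'''||_{\infty}+2||f''||_{\infty}$ obtained from $\int|h'''|=|f''(\pm m)|$, yielding $|\mathcal{F}(f_{m})(k)|\leq Cm/|k|^{3}$ after three integrations by parts. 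Your explicit justification of the third, piecewise integration by parts (boundary terms cancelling at $\pm m$ by continuity of $f_{m}''$ and vanishing at $\pm(m+\tfrac{1}{m})$) is handled only implicitly in the paper, but the argument is the same.
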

\begin{proof}
Define $f_{m}$ by setting;\\

$f_{m}(x)=f(x)$ for $x\in [-m,m]$\\

$f_{m}(x)=h_{1,m}(x)$, for $x\in [-m-{1\over m},-m]$\\

$f_{m}(x)=h_{2,m}(x)$, for $x\in [m,m+{1\over m}]$\\

$f_{m}(x)=0$, for $x\in (-\infty,-m-{1\over m}]$\\

$f_{m}(x)=0$, for $x\in [m+{1\over m},\infty)$\\

where $\{h_{1,m},h_{2,m}\}\subset C^{2}([-m-{1\over m},-m]\cup [m,m+{1\over m}])$ are generated by the data $a_{1,m,0}=f(-m)$, $a_{1,m,1}=f'(-m)$, $a_{1,m,2}=f''(-m)$, $a_{2,m,0}=f(m)$, $a_{2,m,1}=f'(m)$, $a_{2,m,2}=f''(m)$, guaranteed by Lemma \ref{polynomial2} or Lemma \ref{differentialequation}. By the constructions of Lemmas \ref{polynomial2} and \ref{differentialequation}, we have that $(i)$ in Definition \ref{approx} holds. By the definition, we have $(ii)$. As $f_{m}$ is identically zero on $(-\infty,-m-{1\over m}]\cup [m+{1\over m},\infty)$, we have that $(iii)$ holds. By the proof of Lemma \ref{polynomial2}, or using Lemma \ref{differentialequation}, we have that;\\

 $max(|h_{1,m}|_{[m,m+{1\over m}]}|,|h_{2,m}|_{[-m-{1\over m},-m]}|)\leq 16||f||_{\infty}+7||f'||_{\infty}+||f''||_{\infty}$\\

 It follows that;\\

 $\int_{-m-{1\over m}}^{-m}|f_{m}(x)|dx \leq (16||f||_{\infty}+7||f'||_{\infty}+||f''||_{\infty})(-m-(-m-{1\over m}))$\\

 $\leq {D\over m}$\\

 $\int_{m}^{m+{1\over m}}|f_{m}(x)|dx \leq (16||f||_{\infty}+7||f'||_{\infty}+||f''||_{\infty})((m+{1\over m})-m)$\\

 $\leq {E\over m}$\\

 where $D=E=(16||f||_{\infty}+7||f'||_{\infty}+||f''||_{\infty})$\\

 proving $(iv)$. For the second claim, we have that;\\

 $\mathcal{F}(f_{m}''')(k)={1\over (2\pi)^{1\over 2}}\int_{-\infty}^{\infty}f_{m}'''(x)e^{-ikx}dx$\\

 $={1\over (2\pi)^{1\over 2}}([f_{m}''(x)e^{-ikx}]^{\infty}_{-\infty}-ik\int_{-\infty}^{\infty}f_{m}''(x)e^{-ikx}dx$\\

 $={-ik\over (2\pi)^{1\over 2}}([f_{m}'(x)e^{-ikx}]^{\infty}_{-\infty}-ik\int_{-\infty}^{\infty}f_{m}'(x)e^{-ikx}dx)$\\

 $={-k^{2}\over (2\pi)^{1\over 2}}([f_{m}(x)e^{-ikx}]^{\infty}_{-\infty}-ik\int_{-\infty}^{\infty}f_{m}(x)e^{-ikx}dx)$\\

 $={ik^{3}\over (2\pi)^{1\over 2}}\int_{-\infty}^{\infty}f_{m}(x)e^{-ikx}dx)$\\

 so that, for $|k|>1$;\\

 $|\mathcal{F}(f_{m})(k)|=|{1\over (2\pi)^{1\over 2}}\int_{-\infty}^{\infty}f_{m}(x)e^{-ikx}dx|$\\

 $={|{1\over (2\pi)^{1\over 2}}\int_{-\infty}^{\infty}f_{m}'''(x)e^{-ikx}dx|\over |k|^{3}}$\\

$\leq {1\over |k|^{3}(2\pi)^{1\over 2}}\int_{-\infty}^{\infty}|f_{m}'''(x)e^{-ikx}|dx$\\

$={1\over |k|^{3}(2\pi)^{1\over 2}}\int_{-\infty}^{\infty}|f_{m}'''(x)|dx$\\

$={1\over |k|^{3}(2\pi)^{1\over 2}}(\int_{-m-{1\over m}}^{-m}|h_{1,m}'''(x)|dx+\int_{-m}^{m}|f'''(x)|dx+\int_{m}^{m+{1\over m}}|h_{2,m}'''(x)|dx)$\\

$\leq {1\over |k|^{3}(2\pi)^{1\over 2}}(|f''(-m)|+2m|f'''|_{\infty}+|f''(m))$\\

$\leq {1\over |k|^{3}(2\pi)^{1\over 2}}(2||f''||_{\infty}+2m||f'''||_{\infty})$\\

$\leq {1\over |k|^{3}(2\pi)^{1\over 2}}(2m+2m||f'''||_{\infty})$, $(m>||f''||_{\infty}$\\

$={Cm\over |k|^{3}}$\\

where $C={1\over (2\pi)^{1\over 2}}(2+2||f'''||_{\infty})$\\

\end{proof}

\begin{lemma}
\label{inversion}
Let $f\in C^{3}(\mathcal{R})$, with $f$ and ${df\over dx}$ non-oscillatory and of very moderate decrease, with $\{f,f',f'',f'''\}$ bounded, then $\mathcal{F}(f)\in L^{1}(\mathcal{R})$, and we have that;\\

$f(x)=\mathcal{F}^{-1}(\mathcal{F}(f))(x)$\\

where, for $g\in L^{1}(\mathcal{R})$;\\

$\mathcal{F}^{-1}(g)(x)={1\over (2\pi)^{1\over 2}}\int_{-\infty}^{\infty}g(k)e^{ikx}dk$\\
\end{lemma}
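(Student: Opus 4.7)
The plan is to obtain the result from the classical Fourier inversion theorem applied to each approximant $f_m$ of Lemma \ref{approx1}, then pass to the limit as $m\to\infty$. Each $f_m$ is $C^2$ with compact support, and by Lemma \ref{approx1} one has $|\mathcal{F}(f_m)(k)| \leq Cm/|k|^3$ for $|k|>1$, while trivially $\|\mathcal{F}(f_m)\|_\infty \leq (2\pi)^{-1/2}\|f_m\|_1$; hence $\mathcal{F}(f_m)\in L^1(\mathcal{R})$ and classical Fourier inversion gives $f_m(x) = \mathcal{F}^{-1}(\mathcal{F}(f_m))(x)$ pointwise for every $x\in\mathcal{R}$.

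First I would establish that $\mathcal{F}(f)\in L^1(\mathcal{R})$. Integrability at infinity is immediate from $|\mathcal{F}(f)(k)|\leq G/|k|^2$ (Lemma \ref{riemannlebesgue5}), and continuity on any annulus $\{k_0\leq|k|\leq K\}$ follows from Lemma \ref{sequences}. The subtle point is integrability near $k=0$; here I would re-examine the cancellations in the oscillatory integral $\int f(y)e^{-iky}dy$, using the non-oscillatory monotone structure of $f$ to replace the naive $O(1/|k|)$ bound implicit in the proof of Lemma \ref{riemannlebesgue5} by an at-worst logarithmic singularity, which is integrable on $[-k_0,k_0]$. An alternative would be to deduce integrability from Fatou's lemma together with a uniform-in-$m$ $L^1$ estimate on $\mathcal{F}(f_m)$ restricted to $[-k_0,k_0]$ obtained by integration by parts using the boundedness of $f''$ and $f'''$.

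To finish, for fixed $x$ the left-hand side reduces immediately to $f(x)$, since $f_m(x)=f(x)$ whenever $m>|x|$. For the right-hand side I would split $\int \mathcal{F}(f_m)(k)e^{ikx}dk$ into three regions: $\{|k|\leq k_0\}$, $\{k_0<|k|\leq K_m\}$, and $\{|k|>K_m\}$. The middle region converges to its counterpart by the uniform convergence $\mathcal{F}(f_m)\to\mathcal{F}(f)$ on $\{|k|>k_0\}$ from Lemma \ref{sequences}, together with the fact that $|e^{ikx}|=1$. The outer region is controlled uniformly in $x$ by the bounds $|\mathcal{F}(f_m)(k)|\leq Cm/|k|^3$ and $|\mathcal{F}(f)(k)|\leq G/|k|^2$; choosing $K_m\to\infty$ fast enough (say $K_m = m$), both tails vanish as $m\to\infty$. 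The inner region requires a dominated-convergence step using the dominating function supplied by the refined bound near $0$, or equivalently a nonstandard-analysis argument for hyperfinite $m$ in the style of \cite{dep2} and \cite{dep0}, transferring classical inversion along the hyperfinite approximant.

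The principal obstacle is the analysis near $k=0$: both proving that $\mathcal{F}(f)$ is integrable on a neighborhood of the origin, and controlling $\int_{-k_0}^{k_0}(\mathcal{F}(f_m)-\mathcal{F}(f))(k)e^{ikx}dk$ uniformly in $x$ as $m\to\infty$. Away from $0$, the combination of the quadratic decay from Lemma \ref{riemannlebesgue5}, the $Cm/|k|^3$ decay from Lemma \ref{approx1}, and the uniform convergence from Lemma \ref{sequences} makes the passage to the limit essentially routine.
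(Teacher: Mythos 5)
Your skeleton is the same as the paper's: build the compactly supported $C^{2}$ approximants $f_{m}$ of Lemma \ref{approx1}, invert each one classically, and pass to the limit by splitting frequency space into a neighbourhood of $0$, a middle range, and tails controlled by $|\mathcal{F}(f)(k)|\leq G/|k|^{2}$ and $|\mathcal{F}(f_{m})(k)|\leq Cm/|k|^{3}$. But two of your steps do not work as written. First, the middle region: uniform convergence on $\{|k|>k_{0}\}$ alone does not control an integral over an interval whose length $2K_{m}$ grows with $m$; you must use the rate, and Lemma \ref{sequences} gives $|\mathcal{F}(f)(k)-\mathcal{F}(f_{m})(k)|\leq C_{k_{0}}/m$, so the middle-region error is of order $K_{m}/m$. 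With your choice $K_{m}=m$ this is $O(1)$ and does not vanish; you need $K_{m}\to\infty$ \emph{slowly} enough that $K_{m}/m\to 0$, while still $m/K_{m}^{2}\to 0$ for the $\mathcal{F}(f_{m})$ tail, i.e.\ $m^{1/2}\ll K_{m}\ll m$. The paper's choice $m=n^{3/2}$ (i.e.\ $K_{m}=m^{2/3}$) is exactly this balance; your heuristic ``fast enough'' points in the wrong direction for the dominant constraint.

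Second, the region near $k=0$, which you correctly identify as the main obstacle, is left unresolved. The logarithmic refinement of the bound of Lemma \ref{riemannlebesgue5} near $k=0$ is plausible for monotone tails bounded by $C/|y|$, but you do not prove it, and for the dominated-convergence step you would further need it \emph{uniformly in $m$} for the $\mathcal{F}(f_{m})$; your fallback (Fatou plus a uniform $L^{1}$ bound on $[-k_{0},k_{0}]$ ``by integration by parts using $f''$, $f'''$'') cannot work, since integration by parts introduces negative powers of $k$ that blow up at the origin and the relevant norms ($\|f_{m}\|_{L^{1}}\sim\ln m$, $\|f_{m}'''\|_{L^{1}}\sim m$ by Lemma \ref{approx1}) are not uniform in $m$. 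The paper avoids all pointwise information near $k=0$ by working in $L^{2}$: since $f$ is of very moderate decrease, $f\in L^{2}(\mathcal{R})$, hence $\mathcal{F}(f)\in L^{2}(\mathcal{R})$ and Cauchy--Schwarz gives $\mathcal{F}(f)|_{[-n,n]}\in L^{1}$ (which, with the $G/|k|^{2}$ decay, settles $\mathcal{F}(f)\in L^{1}(\mathcal{R})$), while Plancherel gives $\|\mathcal{F}(f)-\mathcal{F}(f_{m})\|_{L^{2}}\to 0$, so that $\|\mathcal{F}(f)-\mathcal{F}(f_{m})\|_{L^{1}((-\epsilon,\epsilon))}\leq\sqrt{2}\,\epsilon^{1/2}$ for $m$ large, and $\epsilon>0$ is arbitrary. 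Adopting this $L^{2}$ device for the inner region, and correcting the choice of $K_{m}$, turns your outline into the paper's proof; as it stands, both of these steps are genuine gaps.
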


\begin{proof}
By Lemma \ref{riemannlebesgue5}, we have that there exists $C\in\mathcal{R}_{>0}$, with $|\mathcal{F}(f)(k)|\leq {C\over |k|^{2}}$, for sufficiently large $k$, $(*)$. As $f$ is of very moderate decrease, we have that $|f|^{2}\leq {D\over |x|^{2}}$, for $|x|>1$, so that, as $f\in C^{0}(\mathcal{R})$, we have that $f\in L^{2}(\mathcal{R})$. It follows that $\mathcal{F}(f)\in L^{2}(\mathcal{R})$, and $\mathcal{F}(f)|_{[-n,n]}\in L^{1}(\mathcal{R})$, for any $n\in\mathcal{N}$, $(**)$. Combining $(*),(**)$, we obtain that $\mathcal{F}(f)\in L^{1}(\mathcal{R})$. Let $\{f_{m}:m\in\mathcal{N}\}$ be the approximating sequence, given by Lemma \ref{approx1}, then, as $f_{m}\in L^{1}(\mathcal{R})$, $\mathcal{F}(f_{m})$ is continuous and, by Lemma \ref{sequences}, converges uniformly to $\mathcal{F}(f)$ on ${\mathcal{R}\setminus \{0\}}$. It follows that $\mathcal{F}(f)\in C^{0}({\mathcal{R}\setminus \{0\}})$. As $f_{m}\in C^{2}(\mathcal{R})$ and $f_{m}''\in L^{1}(\mathcal{R})$, we have that there exists constants $D_{m}\in\mathcal{R}_{>0}$, such that $|\mathcal{F}(f_{m})(k)\leq {D_{m}\over |k|^{2}}$, for sufficiently large $k$. Moreover, as $x^{n}f_{m}(x)\in L^{1}(\mathcal{R})$, for $n\in\mathcal{N}$, $\mathcal{F}(f_{m})\in C^{\infty}(\mathcal{R})$. It follows, the Fourier inversion theorem $f_{m}=\mathcal{F}^{-1}(\mathcal{F}(f_{m}))$, $(***)$, holds for each $f_{m}$, see the proof in \cite{dep2}. By Lemma \ref{sequences}, we have that, for $k_{0}>0$, $|\mathcal{F}(f)(k)-\mathcal{F}(f_{m})(k)|\leq {E_{k_{0}}\over m}$, for $|k|>k_{0}$. As $f$ is of very moderate decrease, we have that, $f-f_{m}\in L^{2}(\mathcal{R})$ and $||f-f_{m}||_{L^{2}(\mathcal{R})}\rightarrow 0$, $||\mathcal{F}(f)-\mathcal{F}(f_{m})||_{L^{2}(\mathcal{R})}\rightarrow 0$  as $m\rightarrow \infty$, in particularly for $m$ sufficiently large, we have that $||\mathcal{F}(f)-\mathcal{F}(f_{m})||_{L^{2}(\mathcal{R})}\leq 1$. We then have that, for $\epsilon>0$;\\

$||\mathcal{F}(f)-\mathcal{F}(f_{m})||_{L^{1}((-\epsilon,\epsilon)}$\\

$=<|\mathcal{F}(f)-\mathcal{F}(f_{m})|,1_{(-\epsilon,\epsilon)}>_{L^{2}((-\epsilon,\epsilon))}$\\

$\leq ||\mathcal{F}(f)-\mathcal{F}(f_{m})||_{L^{2}((-\epsilon,\epsilon))}||1_{(-\epsilon,\epsilon)}||_{L^{2}((-\epsilon,\epsilon))}$\\

$=\sqrt{2}\epsilon^{1\over 2}||\mathcal{F}(f)-\mathcal{F}(f_{m})||_{L^{2}((-\epsilon,\epsilon))}$\\

$\leq \sqrt{2}\epsilon^{1\over 2}$ $(A)$\\

for $m$ sufficiently large. As $f$ is oscillatory and of very moderate decrease, $f$ is monotone in the intervals $(-\infty,-E)$ and $(E,\infty)$, and, we have that for $|k|>\epsilon$, $m>max(E,1)$, $f$ is monotone in the intervals $(-\infty,-m)$ and $(m,\infty)$, with $|f|_{(-\infty,-m)}|\leq {C\over m}$, $|f|_{(m,\infty)}|\leq {C\over m}$, for some $C\in\mathcal{R}_{>0}$. Then, using the proof of the alternating series test, see \cite{S} ;\\

$|\mathcal{F}(f)(k)-\mathcal{F}(m)(k)|={1\over (2\pi)^{1\over 2}}|lim_{r\rightarrow\infty}\int_{m<|x|<r}f(x)e^{-ikx}dx|$\\

$\leq {1\over (2\pi)^{1\over 2}}|lim_{r\rightarrow\infty}\int_{m<|x|<r}f(x)cos(kx)dx|+{1\over (2\pi)^{1\over 2}}|lim_{r\rightarrow\infty}\int_{m<|x|<r}f(x)sin(kx)dx|$\\

$={1\over (2\pi)^{1\over 2}}|\int_{m}^{{\pi(n_{k}+{1\over 2})\over |k|}}f(x)cos(kx)dx+\int_{-{\pi(n_{k}+{1\over 2})\over |k|}}^{-m}f(x)cos(kx)dx+\int_{m}^{{m_{k}\pi\over |k|}}f(x)sin(kx)dx$\\

$+\int_{-{m_{k}\pi\over |k|}}^{-m}f(x)sin(kx)dx+lim_{r\rightarrow\infty}\int_{{\pi(n_{k}+{1\over 2})\over |k|}<|x|<r}f(x)cos(kx)dx$\\

$+lim_{r\rightarrow\infty}\int_{{m_{k}\pi\over |k|}<|x|<r}f(x)sin(kx)dx|$\\

$\leq {4\over (2\pi)^{1\over 2}}{C\over m}{\pi\over |k|}+{1\over (2\pi)^{1\over 2}}|lim_{r\rightarrow\infty}\int_{{\pi(n_{k}+{1\over 2})\over |k|}<|x|<r}f(x)cos(|k|x)dx|$\\

$+{1\over (2\pi)^{1\over 2}}|lim_{r\rightarrow\infty}\int_{{m_{k}\pi\over |k|}<|x|<r}f(x)sin(|k|x)dx|$\\

$\leq {4\over (2\pi)^{1\over 2}}{C\over m}{\pi\over k_{0}}+{1\over (2\pi)^{1\over 2}}|\int_{{\pi(n_{k}+{1\over 2})\over |k|}}^{{\pi(n_{k}+{3\over 2})\over |k|}}f(x)cos(|k|x)dx|+{1\over (2\pi)^{1\over 2}}|\int_{{m_{k}\pi\over |k|}}^{{(m_{k}+1)\pi\over |k|}}f(x)sin(|k|x)dx|$\\

$\leq {4\over (2\pi)^{1\over 2}}{C\over m}{\pi\over k_{0}}+{4\over (2\pi)^{1\over 2}}{C\over m}{\pi\over k_{0}}$\\

$={8\pi^{1\over 2}C\over 2^{1\over 2}k_{0}m}={E_{k_{0}}\over m}$ $(B)$\\

where $n_{k}=\mu n(n\in\mathcal{Z}_{>0},{\pi(n+{1\over 2})\over |k|}\geq m)$, $m_{k}=\mu n(n\in\mathcal{Z}_{>0},{\pi n\over |k|}\geq m)$. Then, for $n\in\mathcal{N}$, $m\in\mathcal{N}$, with $m=n^{3\over 2}$, using Lemma \ref{approx1} and $(A),(B)$, we have, for $x\in\mathcal{R}$, that;\\

$|\mathcal{F}^{-1}(\mathcal{F}(f))(x)-\mathcal{F}^{-1}(\mathcal{F}(f_{m}))(x)|=|\mathcal{F}^{-1}(\mathcal{F}(f)(k)-\mathcal{F}(f_{m})(k))|$\\

$={1\over (2\pi)^{1\over 2}}|\int_{-n}^{n}(\mathcal{F}(f)(k)-\mathcal{F}(f_{m})(k))e^{ikx}dk+\int_{|k|>n}(\mathcal{F}(f)(k)-\mathcal{F}(f_{m})(k))e^{ikx}dk|$\\

$\leq {1\over (2\pi)^{1\over 2}}(\int_{-n}^{n}|\mathcal{F}(f)(k)-\mathcal{F}(f_{m})(k)|dk+\int_{|k|>n}|\mathcal{F}(f)(k)|dk+\int_{|k|>n}|\mathcal{F}(f_{m})(k)|dk)$\\

$\leq {1\over (2\pi)^{1\over 2}}(\int_{-\epsilon}^{\epsilon}|\mathcal{F}(f)(k)-\mathcal{F}(f_{m})(k)|dk+{2nE_{\epsilon}\over m}+\int_{|k|>n}{C\over |k|^{2}}dk+\int_{|k|>n}{Cm\over |k|^{3}}dk)$\\

$\leq {1\over (2\pi)^{1\over 2}}(\sqrt{2}\epsilon^{1\over 2}+{2nE_{\epsilon}\over n^{3\over 2}}+{2C\over n}+{Cn^{3\over 2}\over n^{2}})$\\

$<2\epsilon^{1\over 2}$\\

for sufficiently large $n$, so that, as $\epsilon>0$ was arbitrary, for $x\in\mathcal{R}$;\\

$lim_{m\rightarrow\infty}\mathcal{F}^{-1}(\mathcal{F}(f_{m}))(x)=\mathcal{F}^{-1}\mathcal{F}(f)(x)$, $(****)$\\

and, by Definition \ref{approx}, $(***)$, $(****)$;\\

$f(x)=lim_{m\rightarrow\infty}f_{m}(x)=lim_{m\rightarrow\infty}\mathcal{F}^{-1}(\mathcal{F}(f_{m}))(x)=\mathcal{F}^{-1}\mathcal{F}(f)(x)$\\

\end{proof}

\begin{rmk}
\label{carlesonkorner}
The previous lemma proves an inversion theorem for non-oscillatory functions with very moderate decrease. Such functions belong to $L^{2}(\mathcal{R})$ and an analogous result for Fourier series can be found in \cite{C}, where convergence is proved almost everywhere rather than everywhere. The corresponding result for transforms is that;\\

If $f\in L^{p}(\mathcal{R})$, $p\in (1,2]$, then;\\

$f(x)=lim_{R\rightarrow\infty}{1\over 2\pi}\int_{|k|\leq R}\mathcal{F}(f)(k)e^{ixk}dk$\\

for almost every $x\in\mathcal{R}$.\\

We can define the function $\mathcal{F}_{1}(f)(k)$, for $k\in\mathcal{R}$, using the usual Fourier transform transform method, when $f\in L^{1}(\mathcal{R})$, see \cite{SS}, and, we can define the function $\mathcal{F}_{2}(f)(k)$, for $k\in\mathcal{R}_{\neq 0}$, using the limit definition when $f$ is of very moderate decrease and non-oscillatory, a particular case of $f\in L^{2}(\mathcal{R})$. However, the operators $\mathcal{F}_{1}$ and $\mathcal{F}_{2}$ need not commute, so that even if we show that $\mathcal{F}_{2}\circ \mathcal{F}_{1}=Id$, it doesn't necessarily follow that $\mathcal{F}_{1}\circ\mathcal{F}_{2}=Id$. The first claim is, in a sense, shown in \cite{K};\\

If $f\in L^{1}(\mathcal{R})\cap C^{0}(\mathcal{R})$ and $|\mathcal{F}(f)(k)|\leq {A\over |k|}$, for all $k\neq 0$, and $A\in\mathcal{R}_{\geq 0}$, then;\\

$f(x)=lim_{R\rightarrow\infty}{1\over 2\pi}\int_{|k|\leq R}\mathcal{F}(f)(k)e^{ixk}dk$\\

for every $x\in\mathcal{R}$.

\end{rmk}

\begin{defn}
\label{analyticinfinity}
We say that $f:\mathcal{R}\rightarrow\mathcal{R}$ is analytic at infinity, if $f({1\over x})$ has a convergent power series expansion for $|x|<\epsilon$, $\epsilon>0$. We say that $f$ is eventually monotone, if there exists $y_{0}\in\mathcal{R}_{>0}$ such that $f|_{(-\infty,-y_{0})}$ and $f|_{(y_{0},\infty)}$ are monotone.\\

\end{defn}

\begin{rmk}
\label{physics}
The class of functions which are analytic at infinity and of very moderate decrease is important in Physics. The components of the causal field generated
by Jefimenko equations can be shown to have this property if the corresponding charge and current $(\rho,\overline{J})$ are smooth and have compact support.
\end{rmk}

A criteria for a function being non-oscillatory or eventually monotone is provided by the following lemma;\\

\begin{lemma}
\label{finitely}
If $f:\mathcal{R}\rightarrow\mathcal{R}$, $f\neq 0$ is analytic and analytic at infinity, then it has finitely many zeroes. If $f:\mathcal{R}\rightarrow\mathcal{R}$, ${df\over dx}$ is analytic and analytic at infinity, and $f\neq c$, where $c\in\mathcal{R}$, then $f$ is non-oscillatory. If $f:\mathcal{R}\rightarrow\mathcal{R}$,  $f$ is analytic for $|x|>a$, where $a\in\mathcal{R}_{\geq 0}$, analytic at infinity, and $f|_{|x|>a}\neq 0$ then $f$ has finitely many zeroes in the region $|x|>a+1$. If $f:\mathcal{R}\rightarrow\mathcal{R}$, ${df\over dx}$ is analytic for $|x|>a$, analytic at infinity, and $f|_{|x|>a}\neq c$, where $c\in\mathcal{R}$, then $f$ is eventually monotone.
\end{lemma}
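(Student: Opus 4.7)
The unifying idea is the identity principle for real-analytic functions combined with the substitution $y=1/x$, which converts ``analytic at infinity'' into analyticity at $0$ of $g(y) := f(1/y)$. Since the hypothesis provides $g(y) = \sum_{n\geq 0} a_n y^n$ on $(-\epsilon,\epsilon)$, zeros of $f$ on $|x|>1/\epsilon$ are in bijection with zeros of the analytic function $g$ in $(-\epsilon,\epsilon)\setminus\{0\}$, and isolation of the zeros of $g$ near $0$ prevents zeros of $f$ from accumulating at infinity.

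For the first claim, $f$ analytic and not identically zero has isolated zeros on any compact interval, so only finitely many on $|x|\leq 2/\epsilon$. For $|x|>2/\epsilon$: if $g\equiv 0$ on $(-\epsilon,\epsilon)$ then $f\equiv 0$ on $|x|>1/\epsilon$, and the identity theorem on $\mathcal{R}$ forces $f\equiv 0$, contradicting $f\neq 0$; hence $g\not\equiv 0$, its zeros in $(-\epsilon,\epsilon)$ are isolated, so only finitely many lie in the compact subinterval $[-\epsilon/2,\epsilon/2]$, giving finitely many zeros of $f$ on $|x|\geq 2/\epsilon$. Combined, the zero set of $f$ is finite. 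The third claim runs the same argument localized: the compact annular region $a+1\leq |x|\leq 1/\epsilon$ (if nontrivial) contributes finitely many zeros by analyticity on $|x|>a$, and $|x|>1/\epsilon$ is handled by the same power-series argument, with $f|_{|x|>a}\neq 0$ guaranteeing $g\not\equiv 0$.

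The second and fourth claims then reduce to the first and third applied to $f'$. For the second claim, $f\neq c$ for every constant $c$ means $f$ is nonconstant, so $f'\not\equiv 0$; the first claim applied to $f'$ gives finitely many zeros $y_1<\cdots<y_n$, and continuity with $f'$ nowhere zero outside $\{y_i\}$ forces $f'$ to have constant sign on each resulting interval, so $f$ is monotone there, matching the definition of non-oscillatory. For the fourth claim, I split into cases: if $f'\equiv 0$ on $|x|>a$, then $f$ is constant on each of the two components $(-\infty,-a)$ and $(a,\infty)$, hence monotone on each, so $f$ is eventually monotone with $y_0=a$; otherwise $f'|_{|x|>a}\not\equiv 0$, and the third claim applied to $f'$ supplies finitely many zeros on $|x|>a+1$, so choosing $y_0$ beyond the outermost makes $f'$ of constant sign on $(-\infty,-y_0)$ and on $(y_0,\infty)$, and $f$ is monotone there.

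The main point to watch is scope: in the third and fourth claims, analyticity of $f$ (respectively $f'$) is only assumed on $|x|>a$, so the power-series argument only controls behavior near infinity, and the intermediate compact region must be handled separately by the ordinary isolation of zeros on the open set $|x|>a$. Beyond this bookkeeping, the entire argument reduces to isolation of zeros of analytic functions combined with the elementary fact that a continuous nowhere-zero real function has constant sign.
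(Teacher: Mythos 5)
Your proposal is correct and follows essentially the same route as the paper: the identity theorem for real-analytic functions, the substitution $y=1/x$ transferring analyticity at infinity to analyticity at $0$, and reduction of the two monotonicity claims to the zero-counting claims applied to $f'$; the only difference is cosmetic (a direct compactness decomposition instead of the paper's contradiction via Bolzano--Weierstrass). Your explicit treatment of the degenerate case $f'\equiv 0$ on $|x|>a$ in the fourth claim is in fact slightly more careful than the paper, which passes directly from $f|_{|x|>a}\neq c$ to $f'|_{|x|>a}\not\equiv 0$.
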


\begin{proof}
For the first claim, suppose that $f$ has infinitely many zeroes. Then we can find a sequence $\{y_{i};i\in\mathcal{N}\}$ with $f(y_{i})=0$. If the sequence is bounded, then by the Bolzano-Weierstrass Theorem, we can find a subsequence $\{y_{i_{k}};k\in\mathcal{N}\}$, with $f(y_{i_{k}})=0$, converging to $y\in\mathcal{R}$. By continuity, we have that $f(y)=0$ and $y$ is a limit point of zeroes. As $f$ is analytic, by the identity theorem, it must be identically zero, contradicting the hypothesis. If the sequence is unbounded, then we can find a subsequence $\{y_{i_{k}};k\in\mathcal{N}\}$, with $f(y_{i_{k}})=0$, such that $lim_{k\rightarrow \infty}y_{i_{k}}=\infty$ or $lim_{k\rightarrow \infty}y_{i_{k}}=-\infty$. As $f$ is analytic at $\infty$, we can find $\epsilon>0$, such that $f(y)=0$ for $|y|>{1\over \epsilon}$. By the identity theorem again, $f$ is identically zero, contradicting the hypothesis. It follows that $f$ has finitely many zeroes. For the second claim, as ${df\over dx}\neq 0$, by the first part, there exist finitely many points $\{y_{1},\ldots,y_{n}\}$, with ${df\over dx}|_{y_{i}}=0$, for $1\leq i\leq n$, and with $y_{i}<y_{i+1}$, for $1\leq i\leq n-1$. In particularly, we have that $f|_{(-\infty,y_{1})}$, $f|_{(y_{n},\infty)}$ and $f|_{(y_{i},y_{i+1})}$ is monotone for $1\leq i\leq n-1$, so that $f$ is non-oscillatory. For the third claim, suppose that $f$ has infinitely many zeroes in the region $|x|>a+1$, then we can find a sequence $\{y_{i};i\in\mathcal{N}\}$ with $f(y_{i})=0$ and $|y_{i}>a+1$. As above, if the sequence is bounded, we can find a subsequence $\{y_{i_{k}};k\in\mathcal{N}\}$, with $f(y_{i_{k}})=0$, converging to $y\in\mathcal{R}$, with $|y|\geq a+1>a$. As $f$ is analytic for $|x|>a$, by the identity theorem, it must be identically zero in the region $|x|>a$, contradicting the hypothesis. If the sequence is unbounded, by the same argument as above, $f$ must be identically zero in the region $|x|>a$, contradicting the hypothesis. It follows that $f$ has finitely many zeroes in the region $|x|>a+1$. For the fourth claim, as ${df\over dx}|_{|x|>a}\neq 0$, by the first part, there exist finitely many points $\{y_{1},\ldots,y_{n}\}$, with ${df\over dx}|_{y_{i}}=0$, and $|y_{i}|>a+1$, for $1\leq i\leq n$. Choose $y_{0}>max_{1\leq i\leq n}(|y_{i}|)$, then ${df\over dx}|_{|x|>y_{0}}\neq 0$, so that $f|_{|x|>y_{0}}$ is monotone.

\end{proof}

\begin{lemma}
\label{limit}
If $f\in C^{0}(\mathcal{R})$ and $f$ is of very moderate decrease, then defining the Fourier transform by;\\

$\mathcal{F}(f)(k)={1\over (2\pi)^{1\over 2}}lim_{r\rightarrow \infty}\int_{-r}^{r}f(y)e^{-iky}dy$\\

we have that $\mathcal{F}(f)(k)$ defines a function $\mathcal{F}(f)(k)\in L^{2}(\mathcal{R})$, with $||\mathcal{F}(f)||_{L^{2}(\mathcal{R})}=||f||_{L^{2}(\mathcal{R})}$. There exists a sequence $\{r_{n}:n\in\mathcal{N}\}$ with $r_{n}\in\mathcal{R}_{>0}$ such that;\\

${1\over (2\pi)^{1\over 2}}lim_{n\rightarrow \infty}\int_{-r_{n}}^{r_{n}}f(y)e^{-iky}dy$\\

converges almost everywhere.\\

\end{lemma}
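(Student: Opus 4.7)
The plan is to identify $\mathcal{F}(f)$ with the $L^{2}$-Fourier transform of $f$ via a standard truncation argument, and then to extract an almost-everywhere convergent subsequence of the truncated integrals by the usual $L^{2}$-convergence trick.

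First I would verify that very moderate decrease together with continuity forces $f\in L^{2}(\mathcal{R})$: continuity bounds $|f|$ on $[-1,1]$, while on $|x|>1$ the inequality $|f(x)|^{2}\leq C^{2}/x^{2}$ is integrable. For each $r>0$, I would set $g_{r}(y)=f(y)1_{[-r,r]}(y)\in L^{1}(\mathcal{R})\cap L^{2}(\mathcal{R})$ and write $\mathcal{F}_{r}(f)(k)={1\over (2\pi)^{1\over 2}}\int_{-r}^{r}f(y)e^{-iky}dy$, which coincides with the ordinary $L^{1}$ Fourier transform of $g_{r}$.

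The heart of the argument is a Cauchy estimate: by Plancherel applied to $g_{r}-g_{s}$, one has $||\mathcal{F}_{r}(f)-\mathcal{F}_{s}(f)||_{L^{2}(\mathcal{R})}=||g_{r}-g_{s}||_{L^{2}(\mathcal{R})}$, and the right-hand side tends to $0$ as $r,s\rightarrow\infty$ by the dominated convergence theorem with dominant $|f|\in L^{2}(\mathcal{R})$. Completeness of $L^{2}$ then produces a limit, which I would take as the definition of $\mathcal{F}(f)$; it lies in $L^{2}(\mathcal{R})$, and passing the identity $||\mathcal{F}_{r}(f)||_{L^{2}(\mathcal{R})}=||g_{r}||_{L^{2}(\mathcal{R})}$ to the limit yields the Plancherel identity $||\mathcal{F}(f)||_{L^{2}(\mathcal{R})}=||f||_{L^{2}(\mathcal{R})}$.

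For the second clause, once $\mathcal{F}_{r}(f)\rightarrow\mathcal{F}(f)$ in $L^{2}(\mathcal{R})$ as $r\rightarrow\infty$, any sequence $r_{n}\rightarrow\infty$ gives an $L^{2}$-convergent sequence of truncated integrals, and the standard fact that every $L^{2}$-convergent sequence admits an almost-everywhere convergent subsequence produces the required $\{r_{n}\}$. The only real subtlety, rather than a genuine obstacle, is notational: the statement writes $\mathcal{F}(f)(k)$ as an unqualified pointwise limit, whereas pointwise convergence for every $k$ would demand a Carleson--Hunt type theorem; the second clause of the lemma makes clear that the intended meaning is pointwise convergence along a subsequence, which agrees almost everywhere with the $L^{2}$-limit constructed above.
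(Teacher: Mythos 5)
Your argument is correct and follows essentially the same route as the paper: truncate $f$ to $f\chi_{(-r,r)}\in L^{1}\cap L^{2}$, use the Plancherel isometry to see the truncated transforms are Cauchy in $L^{2}$, invoke completeness to define $\mathcal{F}(f)$ with $\|\mathcal{F}(f)\|_{L^{2}}=\|f\|_{L^{2}}$, and extract an almost-everywhere convergent subsequence from $L^{2}$-convergence. Your closing observation about the pointwise-limit notation matches the paper's own resolution, which designates $\mathcal{F}(f)(k)$ as the a.e.\ limit in the remark following the lemma.
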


\begin{proof}
By the hypotheses, we have that $f\in L^{2}(\mathcal{R})$ and if $f_{r}=f\chi_{(-r,r)}$, where $\chi_{(=r,r)}$ is the characteristic function on $(-r,r)$, then $f_{r}\in L^{1}(\mathcal{R})\cap L^{2}(\mathcal{R})$ and $||f-f_{r}||_{L^{2}(\mathcal{R})}\rightarrow 0$, as $r\rightarrow \infty$, in particular the sequence $\{f_{r}:r\in\mathcal{R}\}$ is Cauchy. By a result in \cite{F}, we have that the usual Fourier transform $\mathcal{F}:L^{1}(\mathcal{R})\cap L^{2}(\mathcal{R})\rightarrow L^{2}(\mathcal{R})$ is an isometry, so that the sequence $\{\mathcal{F}(f_{r}):r\in\mathcal{R}\}$ is also Cauchy. By the completeness of $L^{2}(\mathcal{R})$, there exists $g\in L^{2}(\mathcal{R})$, such that $||g-\mathcal{F}(f_{r})||_{L^{2}(\mathcal{R})}\rightarrow 0$, as $r\rightarrow\infty$. We have that;\\

$||\mathcal{F}(f)(k)||_{L^{2}(\mathcal{R})}=||lim_{r\rightarrow \infty}\mathcal{F}(f_{r})||_{L^{2}(\mathcal{R})}$\\

$=lim_{r\rightarrow\infty}||\mathcal{F}(f_{r})||_{L^{2}(\mathcal{R})}$\\

$=lim_{r\rightarrow\infty}||f_{r}||_{L^{2}(\mathcal{R})}$\\

$=||lim_{r\rightarrow\infty}f_{r}||_{L^{2}(\mathcal{R})}$\\

$=||f||_{L^{2}(\mathcal{R})}$\\

Finally, by a result in \cite{SE}, If we choose $r_{n}\in\mathcal{R}_{>0}$ such that $||f-f_{r_{n}}||_{L^{2}(\mathcal{R})}\leq 2^{-n}$, then $||\mathcal{F}(f)-\mathcal{F}(f_{r_{n}})||_{L^{2}(\mathcal{R})}\leq 2^{-n}$ and $|\mathcal{F}(f_{r_{n}})(k)-\mathcal{F}(f)(k)|\rightarrow 0$ almost everywhere.\\

\end{proof}
\begin{rmk}
\label{ae}
We will use $\mathcal{F}(f)(k)$ to denote this a.e limit.
\end{rmk}

\begin{lemma}
\label{riemannlebesgue5new}
Let $f\in C^{2}(\mathcal{R})$ and $\{f,{df\over dx}\}\subset C(\mathcal{R})$ be of very moderate decrease, with ${d^{2}f\over dx^{2}}$ of moderate decrease, then there exists a constant $G\in\mathcal{R}_{>0}$, such that;\\

$|\mathcal{F}(f)(k)|\leq {G\over |k|^{2}}$\\

for $|k|>1$. In particularly, $\mathcal{F}(f)(k)\in L^{1}(\mathcal{R})$ is defined everywhere for $k\neq 0$.\\

Let $f\in C^{1}(\mathcal{R})$ be of very moderate decrease, with ${df\over dx}$ of very moderate decrease and oscillatory, then there exists a constant $G\in\mathcal{R}_{>0}$, such that;\\

$|\mathcal{F}(f)(k)|\leq {G\over |k|^{2}}$\\

for sufficiently large $|k|$. In particularly, $\mathcal{F}(f)(k)\in L^{1}(\mathcal{R})$ is defined for $k\neq 0$.\\

\end{lemma}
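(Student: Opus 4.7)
The statement has two parts; I treat them separately, both via integration by parts in the defining limit for $\mathcal{F}(f)$.

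For the first part, my plan is to integrate by parts twice. Continuity together with very moderate decrease forces $\lim_{|x|\to\infty}f(x)=0$, and likewise $\lim_{|x|\to\infty}f'(x)=0$, so the boundary contributions at $\pm r$ both vanish as $r\to\infty$. This yields the pointwise identity $\mathcal{F}(f)(k)=(ik)^{-1}\mathcal{F}(f')(k)=-k^{-2}\mathcal{F}(f'')(k)$ for each $k\neq 0$. Since $f''$ is continuous and of moderate decrease, $|f''(x)|\leq C/|x|^{2}$ for $|x|>1$, so $f''\in L^{1}(\mathcal{R})$ and $|\mathcal{F}(f'')(k)|\leq \|f''\|_{L^{1}}/(2\pi)^{1/2}$ uniformly. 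The bound $|\mathcal{F}(f)(k)|\leq G/|k|^{2}$ for $|k|>1$ then follows immediately. For the $L^{1}$ conclusion I would combine this tail bound with the fact that $\mathcal{F}(f)\in L^{2}(\mathcal{R})$ from Lemma \ref{limit} (since very moderate decrease forces $f\in L^{2}$): Cauchy--Schwarz on any bounded neighbourhood of $0$ gives $L^{1}$ locally, and the $G/|k|^{2}$ tail is integrable at infinity.

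For the second part, the first integration by parts still produces $\mathcal{F}(f)(k)=(ik)^{-1}\mathcal{F}(f')(k)$, reducing the problem to showing $|\mathcal{F}(f')(k)|\leq G'/|k|$ for $|k|$ large. My plan is to partition the line into the monotone pieces $(y_{i},y_{i+1})$ of $f'$ (each of length $\geq\delta$) and apply Bonnet's second mean value theorem on each. For $|k|>2\pi/\delta$ every piece contains at least one full period of $e^{-iky}$, so the vanishing $\int_{c}^{c+2\pi/|k|}e^{-iky}\,dy=0$ combined with Bonnet gives the sharpened per-period estimate $(2/|k|)|\Delta f'|$; these estimates telescope within each monotone piece to $(2/|k|)|f'(y_{i+1})-f'(y_{i})|$, with only a residual of length strictly less than one period left over.

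The main obstacle is summing across all $i$: the residual contributions produce a divergent $\sum 1/|y_{i}|$, and even the telescoped total variation $\sum|f'(y_{i+1})-f'(y_{i})|$ can be infinite under these hypotheses (already for $f'(y)=\sin(y)/y$). To overcome this I would invoke the full strength of the hypothesis that $f$ itself is of very moderate decrease: the identity $f(R)=-\int_{R}^{\infty}f'(y)\,dy$ forces the tail of $\int f'$ to be $O(1/R)$, which is precisely the signed cancellation missing from the naive bound. Repackaging the per-piece sum by Abel summation, performing the summation-by-parts on the endpoint values of $f$ at the $y_{i}$ rather than on $f'$, should extract this cancellation and deliver the required $G'/|k|$ bound on $\mathcal{F}(f')(k)$. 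The $L^{1}$ conclusion for $\mathcal{F}(f)$ then follows exactly as in the first part.
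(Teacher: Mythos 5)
Your first part is correct and is essentially the paper's own argument: two integrations by parts with vanishing boundary terms, the uniform bound $|\mathcal{F}(f'')(k)|\leq \|f''\|_{L^{1}}/(2\pi)^{1/2}$ coming from moderate decrease of $f''$, and then $L^{1}$-ness of $\mathcal{F}(f)$ by splitting into a neighbourhood of $0$ (where $\mathcal{F}(f)\in L^{2}$ suffices) and the $G/|k|^{2}$ tail. Nothing to change there.

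The second part, however, has a genuine gap exactly at the step you flag yourself. After reducing to $|\mathcal{F}(f')(k)|\lesssim 1/|k|$, your Bonnet-per-monotone-piece estimate produces the sum $\sum_{i}|f'(y_{i})|/|k|$, which under the stated hypothesis $|f'(y)|\leq C/|y|$ diverges like a harmonic series; you acknowledge this, but the proposed repair --- Abel summation ``on the endpoint values of $f$ at the $y_{i}$'', exploiting $f(R)=-\int_{R}^{\infty}f'(y)\,dy$ --- is only asserted (``should extract this cancellation''), not carried out, and it is not clear it can be. The per-piece second-mean-value bounds are stated in terms of the absolute values $|f'(y_{i})|$ at the turning points, and the quantities $f(y_{i})$ (or the signed tails $\int_{R}^{\infty}f'$) never appear in those estimates, so there is no identified telescoping structure for a summation by parts to act on; the signed cancellation in $\int f'$ does not transfer to the oscillatory integrals $\int_{y_{i}}^{y_{i+1}}f'(y)e^{-iky}\,dy$ once you have passed to moduli. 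So as written the second half is a plan, not a proof. It is worth knowing how the paper closes this: its proof of the second claim does not work with very moderate decrease of $f'$ at all, but explicitly invokes $|f'(x)|\leq D/|x|^{2}$ (and $f'\in L^{1}(\mathcal{R})$), i.e.\ moderate decrease of $f'$ --- a stronger hypothesis than the lemma's statement --- under which the turning-point sum becomes $\sum_{n\geq 0}D/(y_{i_{0}}+n\delta)^{2}<\infty$ and the ``naive'' summation you rejected converges immediately, the interval $[-1,1]$ being handled separately by the finite-monotone-piece estimate of \cite{dep0}. So either adopt that stronger decay on $f'$ (in which case your own per-piece argument finishes in one line), or you must supply a genuinely new argument for the cancellation you are currently only postulating; in its present form the proposal does not establish the stated bound.
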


\begin{proof}

We have that ${d^{2}f\over dx^{2}}\in L^{1}(\mathcal{R})$ and;\\

$|\mathcal{F}({d^{2}f\over dx^{2}})(k)|=|{1\over (2\pi)^{1\over 2}}\int_{-\infty}^{\infty}{d^{2}f\over dx^{2}}(y)e^{-iky}dy|$\\

$\leq {1\over (2\pi)^{1\over 2}}\int_{-\infty}^{\infty}|{d^{2}f\over dx^{2}}(y)e^{-iky}|dy$\\

$\leq {1\over (2\pi)^{1\over 2}}\int_{-\infty}^{\infty}|{d^{2}f\over dx^{2}}(y)|dy$\\

$={1\over (2\pi)^{1\over 2}}||{d^{2}f\over dx^{2}}||_{L^{1}(\mathcal{R})}$\\

$\leq G$\\

where $G\in\mathcal{R}_{>0}$. Then, using integration by parts and the DCT, we have, for $k\neq 0$, that;\\

$\mathcal{F}({d^{2}f\over dx^{2}})(k)={1\over (2\pi)^{1\over 2}}lim_{r\rightarrow\infty}\int_{-r}^{r}{d^{2}f\over dx^{2}}(y)e^{-iky}dy$\\

$={1\over (2\pi)^{1\over 2}}lim_{r\rightarrow\infty}([{df\over dx}(y)e^{-iky}]^{r}_{-r}+ik\int_{-r}^{r}{df\over dx}(y)e^{-iky}dy)$\\

$={1\over (2\pi)^{1\over 2}}[{df\over dx}e^{-iky}]^{\infty}_{-\infty}+ik{1\over (2\pi)^{1\over 2}}lim_{r\rightarrow\infty}\int_{-r}^{r}{df\over dx}e^{-iky}dy$\\

$={ik\over (2\pi)^{1\over 2}}lim_{r\rightarrow\infty}\int_{-r}^{r}{df\over dx}e^{-iky}dy$\\

$={ik\over (2\pi)^{1\over 2}}(lim_{r\rightarrow\infty}([f(y)e^{-iky}]^{r}_{-r}+ik\int_{-r}^{r}f(y)e^{-iky}dy))$\\

$=-k^{2}({1\over (2\pi)^{1\over 2}}lim_{r\rightarrow\infty}\int_{-r}^{r}f(y)e^{-iky}dy$\\

$=-k^{2}\mathcal{F}(f)(k)$\\

The calculation shows that $\mathcal{F}(f)(k)$ and $\mathcal{F}({df\over dx})(k)$ are defined for $k\neq 0$, and, for $|k|>1$;\\

$|\mathcal{F}(f)(k)|\leq {|\mathcal{F}({d^{2}f\over dx^{2}})(k)|\over |k|^{2}}$\\

$\leq {G\over |k|^{2}}$\\

We have that $\mathcal{F}(f)(k)\in L^{2}(\mathcal{R})$, so that $\mathcal{F}(f)|_{(-1,1)}(k)\in L^{2}(\mathcal{R})\subset L^{1}(\mathcal{R})$, and $\mathcal{F}(f)(k)\in L^{1}(\mathcal{R})$\\.

For the second claim, we have that, ${df\over dx}\in L^{1}(\mathcal{R})$, and, similarly to the above, $|\mathcal{F}({df\over dx})(k)|\leq H$, for some $H\in\mathcal{R}_{>0}$ and $k\in\mathcal{R}$. Using the DCT, for $k\neq 0$;\\

$\mathcal{F}({df\over dx})(k)={1\over (2\pi)^{1\over 2}}lim_{r\rightarrow\infty}\int_{-r}^{r}{df\over dx}(y)e^{-iky}dy$\\

$={1\over (2\pi)^{1\over 2}}lim_{r\rightarrow\infty}([f(y)e^{-iky}]^{r}_{-r}+ik\int_{-r}^{r}f(y)e^{-iky}dy)$\\

$={1\over (2\pi)^{1\over 2}}[f(y)e^{-iky}]^{\infty}_{-\infty}+ik{1\over (2\pi)^{1\over 2}}lim_{r\rightarrow\infty}\int_{-r}^{r}f(y)e^{-iky}dy$\\

$={ik\over (2\pi)^{1\over 2}}lim_{r\rightarrow\infty}\int_{-r}^{r}f(y)e^{-iky}dy$\\

$=ik\mathcal{F}(f)(k)$\\

showing that $\mathcal{F}(f)(k)$ is defined for $k\neq 0$, and, for $|k|>1$;\\

$|\mathcal{F}(f)(k)|\leq {|\mathcal{F}({df\over dx})(k)|\over |k|}$ $(*)$\\

$\leq {H\over |k|}$\\

We have that ${df\over dx}|_{(-1,1)}$ is non-oscillatory, and, moreover, there are at most ${2\over \delta}$ monotone intervals. Using the proof in \cite{dep0}, we get, for sufficiently large $|k|$;\\

$|\int_{-1}^{1}{df\over dx}e^{-iky}dy|<{E_{1}\over |k|}$\\

 where $E_{1}\leq {4C\pi\over \delta}$ and $C=max_{x\in\mathcal{R}}|{df\over dx}|$. As ${df\over dx}$ is of moderate decrease, we have that $|{df\over dx}|\leq {D\over |x|^{2}}$, for $|x|>1$, where $D\in\mathcal{R}_{>0}$.  Then, using the proof in \cite{dep0} again, and the definition of oscillatory, we have that, for sufficiently large $|k|$;\\

$|\int_{|y|>1}{df\over dx}e^{-iky}dy|<({2\pi\over |k|}\sum_{|y_{i}|>1}{D\over |y_{i}|^{2}})$\\

$\leq ({4\pi\over |k|}\sum_{n\in\mathcal{Z}_{\geq 0}}{D\over (y_{i_{0}}+n\delta)^{2}})$\\

$\leq {4\pi D\over \delta|k|}\int_{y_{i_{0}}}^{\infty}{dx\over x^{2}}$\\

$={4\pi D\over \delta|k|y_{i_{0}}}$\\

$\leq {4\pi D\over \delta|k|}$\\

where $|y_{i_{0}}|\geq 1$ and $|y_{i_{0}}|\leq |y_{i}|$, for all $|y_{i}|\geq N$. It follows that;\\

$|\mathcal{F}({df\over dx})(k)|=|\int_{-1}^{1}{df\over dx}e^{-iky}dy+\int_{|y|>1}{df\over dx}e^{-iky}dy|$\\

$\leq |\int_{-1}^{1}{df\over dx}e^{-iky}dy|+|\int_{|y|>1}{df\over dx}e^{-iky}dy|$\\

$\leq {E_{1}\over |k|}+{4\pi D\over \delta|k|}$\\

$\leq {4C\pi\over \delta|k|}+{4\pi D\over \delta|k|}$\\

$={R\over |k|}$ $(**)$\\

where $R={4\pi(C+D)\over \delta}$. Combining $(*)$ and $(**)$, we obtain that, for sufficiently large $|k|$;\\

$|\mathcal{F}(f)(k)|\leq {|\mathcal{F}({df\over dx})(k)|\over |k|}$\\

$\leq {G\over |k|^{2}}$\\

where $G={1\over (2\pi)^{1\over 2}}R$\\

\end{proof}

\begin{lemma}
\label{improvement}
Let $f\in C^{3}(\mathcal{R})$, with $\{f,{df\over dx}\}$ of very moderate decrease, ${d^{2}f\over dx^{2}}$ of very moderate decrease, and $\{f,f',f'',f'''\}$ bounded, then $\mathcal{F}(f)\in L^{1}(\mathcal{R})$, and we have that;\\

$f(x)=\mathcal{F}^{-1}(\mathcal{F}(f))(x)$\\

where, for $g\in L^{1}(\mathcal{R})$;\\

$\mathcal{F}^{-1}(g)(x)={1\over (2\pi)^{1\over 2}}\int_{-\infty}^{\infty}g(k)e^{ikx}dk$\\

The same result holds if $f\in C^{3}(\mathcal{R})$, with $f$ of very moderate decrease, ${df\over dx}$ of very moderate decrease and oscillatory, and $\{f,f',f'',f'''\}$ bounded.\\

\end{lemma}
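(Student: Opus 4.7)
The plan is to mimic the proof of Lemma \ref{inversion} almost verbatim, substituting Lemma \ref{riemannlebesgue5new} for Lemma \ref{riemannlebesgue5} and supplying a new tail estimate adapted to each of the two hypothesis sets. First I would verify that $\mathcal{F}(f)\in L^{1}(\mathcal{R})$: Lemma \ref{riemannlebesgue5new} supplies $|\mathcal{F}(f)(k)|\leq G/|k|^{2}$ for $|k|>1$ in both cases, and Lemma \ref{limit} gives $\mathcal{F}(f)\in L^{2}(\mathcal{R})$, which is locally $L^{1}$ near the origin; together these yield $\mathcal{F}(f)\in L^{1}(\mathcal{R})$. Next I would invoke Lemma \ref{approx1} to produce the approximating sequence $\{f_{m}\}$, using the bounded $C^{3}$ hypothesis on $f$. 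Each $f_{m}$ is compactly supported and of class $C^{2}$, so the classical Fourier inversion $f_{m}=\mathcal{F}^{-1}(\mathcal{F}(f_{m}))$ holds, and Lemma \ref{approx1} furnishes the crucial estimate $|\mathcal{F}(f_{m})(k)|\leq Cm/|k|^{3}$ for $|k|>1$.

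The technical heart of the argument is an analog of the tail estimate $(B)$ in the proof of Lemma \ref{inversion}, namely
\[
|\mathcal{F}(f)(k)-\mathcal{F}_{m}(f)(k)|\leq \frac{E_{k_{0}}}{m},\quad |k|>k_{0},\ m\ \text{sufficiently large}.
\]
In the first case, with ${d^{2}f\over dx^{2}}$ of moderate decrease, I integrate by parts twice on $\int_{|y|>m}f(y)e^{-iky}dy$. The boundary terms contribute $|f(\pm m)|/|k|+|f'(\pm m)|/k^{2}=O(1/(m|k|))$ because $f,f'$ are of very moderate decrease, and the remaining piece $(1/k^{2})\int_{|y|>m}f''(y)e^{-iky}dy$ is $O((1/k^{2})\int_{m}^{\infty}dy/y^{2})=O(1/(mk^{2}))$; both are dominated by $E_{k_{0}}/m$. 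In the second case, with ${df\over dx}$ oscillatory of very moderate decrease, one integration by parts reduces the tail integral to a $1/k$ factor times $\int_{|y|>m}f'(y)e^{-iky}dy$, which I would bound by the alternating-series/oscillation comparison used in the second half of the proof of Lemma \ref{riemannlebesgue5new}, now applied on intervals $(m,\infty)$ with amplitude controlled at the moving threshold $m$, exploiting the uniform lower bound $\delta$ on the lengths of the monotone segments.

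With this tail bound in hand, the analog of Lemma \ref{sequences} delivers uniform convergence $\mathcal{F}(f_{m})\to \mathcal{F}(f)$ on $\{|k|>k_{0}\}$, and the rest of the proof repeats the three-region split of Lemma \ref{inversion}. For fixed $x\in\mathcal{R}$, $\epsilon>0$ and the coupling $m=n^{3/2}$, I decompose
\[
\mathcal{F}^{-1}\bigl(\mathcal{F}(f)-\mathcal{F}(f_{m})\bigr)(x)
\]
into integrals over $|k|<\epsilon$, $\epsilon\leq |k|\leq n$, and $|k|>n$. The first piece is controlled by Cauchy--Schwarz and the $L^{2}$ convergence $\|\mathcal{F}(f)-\mathcal{F}(f_{m})\|_{L^{2}}\to 0$ (which holds because $\|f-f_{m}\|_{L^{2}}\to 0$ by the very moderate decrease of $f$), contributing $O(\sqrt{\epsilon})$. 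The middle piece is bounded by $2n\cdot E_{\epsilon}/m=O(1/\sqrt{n})$. The outer piece is bounded by $\int_{|k|>n}(G/k^{2}+Cm/|k|^{3})\,dk=O(1/n)+O(m/n^{2})=O(1/\sqrt{n})$. Sending $n\to\infty$ and then $\epsilon\to 0$ gives $\mathcal{F}^{-1}(\mathcal{F}(f_{m}))(x)\to \mathcal{F}^{-1}(\mathcal{F}(f))(x)$; combining this with $f_{m}(x)\to f(x)$ (from Definition \ref{approx}) and $f_{m}=\mathcal{F}^{-1}(\mathcal{F}(f_{m}))$ closes the proof.

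The main obstacle I anticipate is the tail bound in the second case: the oscillation cancellation used in Lemma \ref{riemannlebesgue5new} must be redone starting at the moving threshold $|y|=m$ rather than at a fixed radius, and one must keep track of the $O(1/m)$ amplitude at that scale while respecting the $\delta$-separation of the monotone segments of $f'$. Everything else is essentially a mechanical transcription of the argument already given for Lemma \ref{inversion}.
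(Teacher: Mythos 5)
Your overall skeleton (the $L^{1}$ membership of $\mathcal{F}(f)$ via Lemma \ref{riemannlebesgue5new} plus the $L^{2}$ bound near $0$, the approximating sequence of Lemma \ref{approx1}, inversion for each $f_{m}$, the coupling $m\approx n^{3\over 2}$, and the outer bound $\int_{|k|>n}({G\over k^{2}}+{Cm\over |k|^{3}})dk$) matches the paper, but the middle step is where you genuinely diverge. The paper does \emph{not} reprove a pointwise tail estimate of type $(B)$: it abandons the three-region split of Lemma \ref{inversion} and controls the whole inner integral at once by Cauchy--Schwarz and Plancherel, $\int_{-n}^{n}|\mathcal{F}(f)(k)-\mathcal{F}(f_{m})(k)|\,dk\leq (2n)^{1\over 2}\|f-f_{m}\|_{L^{2}(\mathcal{R})}\leq 2D\,n^{1\over 2}m^{-{1\over 2}}$, which with $m=[n^{3\over 2}]$ is $O(n^{-{1\over 4}})$; the very moderate decrease of $f$ alone gives $\|f-f_{m}\|_{L^{2}}=O(m^{-{1\over 2}})$, so no $\epsilon$-window at the origin and no bound uniform in $k$ is needed. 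This substitution is exactly the point of Lemma \ref{improvement}: the estimate $(B)$ in Lemma \ref{inversion} relied on the monotone tails of $f$, and replacing it by the cruder $L^{2}$ estimate is what allows the non-oscillatory hypothesis to be dropped.

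Measured against that, your first case is essentially sound: two integrations by parts on $\int_{|y|>m}f(y)e^{-iky}dy$ give $O({1\over m|k|})$, provided $f''$ is of \emph{moderate} decrease so that $\int_{m}^{\infty}|f''|\,dy=O({1\over m})$ (the statement's ``very moderate decrease'' for $f''$ would break that step, but this mismatch with Lemma \ref{riemannlebesgue5new} is already present in the paper). Your second case, however, contains a genuine gap, and it is the step you yourself flag: after one integration by parts you need ${1\over |k|}|\int_{|y|>m}f'(y)e^{-iky}dy|\leq {E_{k_{0}}\over m}$ uniformly for $|k|>k_{0}$. The oscillation/alternating-series comparison bounds each monotone segment of $f'$ by $\sup_{[y_{i},y_{i+1}]}|f'|\cdot{\pi\over |k|}$ and then sums over segments beyond $m$; with only $|f'(y)|\leq {C\over |y|}$ (very moderate decrease) and segments merely $\delta$-separated, the resulting series $\sum_{|y_{i}|>m}{1\over |y_{i}|}$ diverges, so no $O({1\over m})$ bound emerges --- one needs $|f'(y)|\leq {D\over |y|^{2}}$ (moderate decrease, which is what the proof of Lemma \ref{riemannlebesgue5new} silently uses), and even then uniformity in $k$ over $|k|>k_{0}$ is delicate near frequencies resonant with the oscillation of $f'$. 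The paper's $L^{2}$ route sidesteps this entirely; if you replace your tail bound by its estimate $(A)$, your argument closes in both cases with no further work.
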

\begin{proof}
By Lemma \ref{riemannlebesgue5new}, we have that there exists $E\in\mathcal{R}_{>0}$, with $|\mathcal{F}(f)(k)|\leq {E\over |k|^{2}}$, for sufficiently large $k$, $(*)$. As $f$ is of very moderate decrease, we have that $|f|^{2}\leq {D\over |x|^{2}}$, for $|x|>1$, so that, as $f\in C^{0}(\mathcal{R})$, we have that $f\in L^{2}(\mathcal{R})$. It follows that $\mathcal{F}(f)\in L^{2}(\mathcal{R})$, and $\mathcal{F}(f)|_{[-n,n]}\in L^{1}(\mathcal{R})$, for any $n\in\mathcal{N}$, $(**)$. Combining $(*),(**)$, we obtain that $\mathcal{F}(f)\in L^{1}(\mathcal{R})$. Let $\{f_{m}:m\in\mathcal{N}\}$ be the approximating sequence, given by Lemma \ref{approx1}, then, as $f_{m}\in L^{1}(\mathcal{R})$, $\mathcal{F}(f_{m})$ is continuous. As $f_{m}\in C^{2}(\mathcal{R})$ and $f_{m}''\in L^{1}(\mathcal{R})$, we have that there exists constants $D_{m}\in\mathcal{R}_{>0}$, such that $|\mathcal{F}(f_{m})(k)\leq {D_{m}\over |k|^{2}}$, for sufficiently large $k$. Moreover, as $x^{n}f_{m}(x)\in L^{1}(\mathcal{R})$, for $n\in\mathcal{N}$, $\mathcal{F}(f_{m})\in C^{\infty}(\mathcal{R})$. It follows, the Fourier inversion theorem $f_{m}=\mathcal{F}^{-1}(\mathcal{F}(f_{m}))$, $(***)$, holds for each $f_{m}$, see the proof in \cite{dep2}. As $f$ is of very moderate decrease, we have that, $f-f_{m}\in L^{2}(\mathcal{R})$ and $||f-f_{m}||_{L^{2}(\mathcal{R})}\rightarrow 0$, $||\mathcal{F}(f)-\mathcal{F}(f_{m})||_{L^{2}(\mathcal{R})}\rightarrow 0$  as $m\rightarrow \infty$. We then have that, for $n\in\mathcal{N}$, $m\in\mathcal{N}$;\\

$|\int_{-n}^{n}(\mathcal{F}(f)(k)-\mathcal{F}(f_{m})(k))dk|$\\

$\leq (2n)^{1\over 2}||\mathcal{F}(f-f_{m})||_L^{2}$\\

$\leq (2n)^{1\over 2}||f-f_{m}||_{L^{2}}$\\

$=(2n)^{1\over 2}(\int_{|x|>m}f^{2}(x)dx)^{1\over 2}$\\

$\leq (2n)^{1\over 2}(\int_{|x|>m}({D^{2}\over x^{2}})(x)dx)^{1\over 2}$\\

$=(2n)^{1\over 2}({2D^{2}\over m})^{1\over 2}$\\

$=2D{n^{1\over 2}\over m^{1\over 2}}$ $(A)$\\

where $D\in\mathcal{R}_{>0}$. Then, for $n\in\mathcal{N}$, $m\in \mathcal{N}$, with $m=[n^{3\over 2}]$, using Lemma \ref{approx1} and $(A)$, we have, for $x\in\mathcal{R}$, that;\\

$|\mathcal{F}^{-1}(\mathcal{F}(f))(x)-\mathcal{F}^{-1}(\mathcal{F}(f_{m}))(x)|=|\mathcal{F}^{-1}(\mathcal{F}(f)(k)-\mathcal{F}(f_{m})(k))|$\\

$={1\over (2\pi)^{1\over 2}}|\int_{-n}^{n}(\mathcal{F}(f)(k)-\mathcal{F}(f_{m})(k))e^{ikx}dk+\int_{|k|>n}(\mathcal{F}(f)(k)-\mathcal{F}(f_{m})(k))e^{ikx}dk|$\\

$\leq {1\over (2\pi)^{1\over 2}}(|\int_{-n}^{n}|\mathcal{F}(f)(k)-\mathcal{F}(f_{m})(k)|dk|+\int_{|k|>n}|\mathcal{F}(f)(k)|dk+\int_{|k|>n}|\mathcal{F}(f_{m})(k)|dk)$\\

$\leq {1\over (2\pi)^{1\over 2}}(2D{n^{1\over 2}\over m^{1\over 2}}+\int_{|k|>n}{E\over |k|^{2}}dk+\int_{|k|>n}{Cm\over |k|^{3}}dk)$\\

$\leq {1\over (2\pi)^{1\over 2}}(2D{n^{1\over 2}\over m^{1\over 2}}+{2E\over n}+{Cn^{3\over 2}\over n^{2}})$\\

$\leq {1\over (2\pi)^{1\over 2}}(2D({n\over [n^{3\over 2}]})^{1\over 2}+{2E\over n}+{Cn^{3\over 2}\over n^{2}})$\\

$<\epsilon$\\

for sufficiently large $n$, so that, as $\epsilon>0$ was arbitrary, for $x\in\mathcal{R}$;\\

$lim_{m\rightarrow\infty}\mathcal{F}^{-1}(\mathcal{F}(f_{m}))(x)=\mathcal{F}^{-1}\mathcal{F}(f)(x)$, $(****)$\\

and, by Definition \ref{approx}, $(***)$, $(****)$;\\

$f(x)=lim_{m\rightarrow\infty}f_{m}(x)=lim_{m\rightarrow\infty}\mathcal{F}^{-1}(\mathcal{F}(f_{m}))(x)=\mathcal{F}^{-1}\mathcal{F}(f)(x)$\\

\end{proof}

\end{document}